    \newcommand{\BA}{{\mathbb {A}}} 
    \newcommand{\BC}{{\mathbb {C}}}
     \newcommand{\BR}{{\mathbb {R}}}
     \newcommand{\BZ}{{\mathbb {Z}}}
     \newcommand{\CL}{{\mathcal {L}}}
    \newcommand{\CS}{{\mathcal {S}}}
    \newcommand{\RI}{{\mathrm {I}}} 
    \newcommand{\RO}{{\mathrm {O}}}
    \newcommand{\RW}{{\mathrm {W}}}
    \newcommand{\ad}{{\mathrm{ad}}}
    \newcommand{\diag}{{\mathrm{diag}}}
    \newcommand{\Gal}{{\mathrm{Gal}}} \newcommand{\GL}{{\mathrm{GL}}}
    \newcommand{\Hom}{{\mathrm{Hom}}}
    \newcommand{\GO}{{\mathrm{GO}}}
    \newcommand{\SL}{{\mathrm{SL}}}
    \newcommand{\Sp}{{\mathrm{Sp}}}
      \newcommand{\Supp}{{\mathrm{Supp}}}
    \newcommand{\Vol}{{\mathrm{Vol}}}
    \newcommand{\GSO}{{\mathrm{GSO}}}
    \newcommand{\sk}{\medskip}
    \newcommand{\lra}{\longrightarrow}
    \newcommand{\bs}{\backslash}
    \newcommand{\s}{\sk\noindent}
    \theoremstyle{plain}
    \newtheorem{thm}{Theorem}[section] 
    \newtheorem{lem}[thm]{Lemma}  \newtheorem{prop}[thm]{Proposition}
 \newtheorem{def-prop}[thm]{Definition-Proposition}
\theoremstyle{remark} \newtheorem{remark}[thm]{Remark}
\theoremstyle{remark} 
\theoremstyle{remark} 
    \numberwithin{equation}{section}
\begin{document}

\title{Ichino Periods for CM Forms}

\author{Li Cai}
\address{Academy for Multidisciplinary Studies\\
Beijing National Center for Applied Mathematics\\
Capital Normal University\\
Beijing, 100048, People's Republic of China}
\email{caili@cnu.edu.cn}

\author{Yangyu Fan}
\address{Academy for Multidisciplinary Studies\\
Beijing National Center for Applied Mathematics\\
Capital Normal University\\
Beijing, 100048, People's Republic of China}
\email{b452@cnu.edu.cn}

\author{Yong Jiang}
\address{School of Mathematical Sciences\\
University of Science and Technology of China\\
Hefei, Anhui 230026, People's Republic of China}
\email{jiangyong@ustc.edu.cn}

\begin{abstract}
In both local and global settings, we establish explicit relations between Ichino triple product period and Waldspurger toric periods 
for CM forms via the theta lifting and the see-saw principle.
\end{abstract}

\maketitle

\tableofcontents

\section{Introduction}
Let $F$ be a number field with $\BA$ its ring of adeles. 
Let $H \subset G$ be reductive groups over $F$. A basic object in the theory
of automorphic forms is the {\em (H-)period integrals}
\[ \int_{[H]} \varphi(h)dh\]
for automorphic forms  $\varphi$ on $G(\BA)$. Here, $[H] = H(F) \bs H(\BA)$ is the automorphic quotient of $H$.

Period integrals are closely related to $L$-values. When $(G,H)$ is {\em spherical} (that is,
there is a Borel subgroup $B\subset G$ such that $BH$ is open in $G$), a general conjecture of 
Sakellaridis-Venkatesh \cite{SV} states roughly that for an irreducible unitary automorphic representation $\pi=\otimes_v^\prime\pi_v$ on $G(\BA)$, the square of the $H$-period integral on $\pi$ decomposes into  the product of certain $L$-values associated to $\pi$ and the so-called canonical local period integrals on $\pi_v$, which are closely related to the local $L$-value of $\pi_v$.

When two (products of) global/local period integrals are related to a common $L$-value, it seems natural to expect  there exists an {\em explicit} relation between these two global/local period integrals. In this paper, we shall establish such a {\em period relation} between the Waldspurger toric period integral and the Ichino triple product period integral via theta lifting and the see-saw principle.

More precisely, let $\pi$ be an unitary cuspidal automorphic representation on $\GL_2(\BA)$ with central character $\omega$. Let $E$ be a quadratic field
extension of $F$ with associated quadratic character $\eta$ and $\tau \in \Gal(E/F)$ be the non-trivial involution. For $i=1,2$, let $\chi_i$ be  Hecke characters on $E_{\BA}^\times$ such that $\chi_i\neq \chi_i^\tau:=\chi_i\circ\tau$. Then  the theta lifting  $\pi_{\chi_i}$ of $\chi_i$ is a cuspidal automorphic representation on $\GL_2(\BA)$. In this setting, the triple product $L$-function decomposes into a product of Rankin-Selberg L-functions
\[\tag{$*$} L(s,\pi \times \pi_{\chi_1} \times \pi_{\chi_2})
= L(s,\pi \times \pi_{\chi_1\chi_2}) L(s,\pi \times \pi_{\chi_1\chi_2^\tau})= L(s,\pi \times \chi_1\chi_2) L(s,\pi \times \chi_1\chi_2^\tau).\]
 Let $\mu_1=\chi_1\chi_2$, $\mu_2=\chi_1\chi_2^\tau$ and $\chi=\chi_1\boxtimes\chi_2$, $\pi_\chi=\pi_{\chi_1}\boxtimes\pi_{\chi_2}$. From now on,
\begin{itemize}
    \item assume the crucial self-dual condition
\[ \mu_1|_{\BA^\times}  \omega= \mu_2|_{\BA^\times}  \omega=1,\]
which guarantees   all the three $L$-series in $(*)$ are symmetric.
\item assume moreover the sign condition
\[ \epsilon(1/2,\pi \times \pi_{\chi} )=\epsilon(1/2,\pi \times \mu_1)= \epsilon(1/2,\pi \times \mu_2)=1. \]
\end{itemize}
Then the celebrating Waldspurger and Ichino formulae imply that:
\begin{itemize}
	\item  For $i=1,2$,  there exists a  unique quaternion algebra $B_i\supset E$ over $F$ and an irreducible automorphic representation $\pi_{B_i}$ on $B_{i,\BA}^\times$ whose Jacquet-Langlands correspondence is $\pi$  such that
		\[\Hom_{E_\BA^\times}(\pi_{B_i}\boxtimes\mu_i,\BC)=\BC.\]
  Here $E^\times$ embeds into $B_i^\times\times E^\times $ diagonally. 
The Waldspurger toric  period
	\[P_{(\pi_{B_i},\mu)}^\RW:\ \varphi\in \pi_{B_i}\mapsto \int_{[F^\times \bs E^\times]} \varphi(t)\mu_i(t)dt\in \Hom_{E_\BA^\times}(\pi_{B_i},\mu_i)
	\]
 satisfies that up to simple factors,
 \[P_{(\pi_{B_i},\mu_i)}^\RW(\varphi_1) \otimes P_{(\pi_{B_i}^\vee,\mu_i^{-1})}^\RW(\varphi_2) = L
	(1/2,\pi\times \mu_i) \alpha^{\RW}(\varphi_1,\varphi_2).\]
 Here $\pi_{B_i}^\vee$ denotes the contragredient representation of $\pi_{B_i}$ and\[\alpha^{\RW}=\prod_v \alpha_v^{\RW}\neq0\in \Hom_{E_\BA^\times}(\pi_{B_i}\boxtimes\mu_i,\BC)\times  \Hom_{E_\BA^\times}(\pi_{B_i}^\vee\boxtimes\mu_i^{-1},\BC)\] is  the product of canonical local  period.
 
 \item  There exists a  unique quaternion algebra $D$ over $F$ and an irreducible automorphic representation $\Pi_D$ over $(D_{\BA}^\times)^3$ 
 whose Jacquet-Langlands correspondence is $\pi\boxtimes\pi_\chi$
such that 
		\[\Hom_{D_\BA^\times}(\Pi_D,\BC) =\BC.\]
Here $D^\times$ embeds  into $(D^\times)^3$ diagonally. The Ichino period 
\[P_{\Pi_D}^\RI:\  \varphi \in \Pi_D\mapsto\int_{[F^\times\bs D^\times]} \varphi(h)dh \in \Hom_{D_\BA^\times}(\Pi_D,\BC)\]
satisfies that up to simple factors,
	\[P_{\Pi_D}^\RI(\varphi_1) \otimes P_{\Pi_D^\vee}^\RI(\varphi_2)  = L(1/2,\Pi) \alpha^{\RI}(\varphi_1,\varphi_2)\]
	where \[\alpha^{\RI}=\prod_v \alpha_v^{\RI}\neq0\in \Hom_{D_\BA^\times}(\Pi_D,\BC)\times \Hom_{D_\BA^\times}(\Pi_D^\vee,\BC)\] is the product of canonical local period.
\end{itemize}
Now we state our results on global periods relations.
\begin{thm}[Theorem \ref{global-see-saw}] Assume $B_1=B_2=:B$, then one has $D=M_2(F)$ and moreover for any $\varphi \in \pi$, $\phi \in \CS(B_\BA \times \BA^\times)$, 
	\[P_{(\pi_B,\mu_1)}^\RW \otimes P_{(\pi_B^\vee,\mu_2^{-1})}^\RW 
	\left( \theta_\pi(\varphi,\phi) \right) = P_{\pi \boxtimes \pi_\chi}^\RI \left( \varphi \otimes
	\theta_\chi(\phi)\right)\]  
under certain normalization of Haar measures. 
	Here $\CS(B_\BA \times \BA^\times)$ is the space of Schwartz functions, 
		\[\theta_\pi \in \Hom_{\GL_2(\BA) \times (B_\BA^\times)^2}\left(
		\pi \boxtimes \CS(B_\BA \times \BA^\times), \pi_B \boxtimes \pi_B^\vee\right)\] 
		is the Shimizu lifting, and 
		\[\theta_\chi \in \Hom_{(\GL_2(\BA)^2)^0 \times ( (E_\BA^\times)^2)^0}\left(\chi
		\boxtimes \CS(B_\BA \times \BA^\times), \pi_\chi\right)\] is the global theta lifting of
		$\chi$ (see Section \ref{global theory} for details).
\end{thm}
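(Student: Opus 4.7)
The plan is to establish the identity by exhibiting both sides as two different unfoldings of a single triple integral involving the theta kernel for the dual pair $(\GL_2, \GSO(B))$, where $B$ carries its reduced norm as a $4$-dimensional quadratic form. The key structural input is the orthogonal decomposition $B = E \oplus E^\perp$ under the reduced norm, with $E^\perp = Ej$ for some $j \in B^\times$ satisfying $jt = t^\tau j$ for $t \in E$; this produces a see-saw with $\GL_2^{\mathrm{diag}}$ opposite $\GSO(B)$ at the top, and $\GL_2 \times \GL_2$ opposite $\GSO(E) \times \GSO(E^\perp)$ at the bottom.

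First I would unfold the left-hand side. Inserting the integral representation of the Shimizu lift $\theta_\pi(\varphi, \phi) \in \pi_B \boxtimes \pi_B^\vee$ (a function of $(h_1, h_2) \in (B_\BA^\times)^2$) and then applying the two toric periods against $\mu_1$ and $\mu_2^{-1}$, Fubini yields
\[\int_{[\PGL_2]} \varphi(g) \left( \int_{[F^\times \bs E^\times]^2} \theta(g, (t_1, t_2), \phi)\, \mu_1(t_1)\, \mu_2^{-1}(t_2)\, dt_1\, dt_2 \right) dg,\]
where $(t_1, t_2) \in E^\times \times E^\times$ is embedded in $(B_\BA^\times)^2 = \GSO(B)(\BA)$ via $E \hookrightarrow B$ on each factor.

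Next I would push the inner integral through the see-saw via a change of variables. The element $(t_1, t_2)$ acts on $B = E \oplus E^\perp$ by $x \mapsto t_1 x t_2^{-1}$; using $jt = t^\tau j$, this preserves the decomposition and acts by multiplication by $s_1 := t_1 t_2^{-1}$ on $E$ and by $s_2 := t_1 t_2^{-\tau}$ on $E^\perp$. Since $\Nm(s_1) = \Nm(s_2)$, the assignment $(t_1, t_2) \mapsto (s_1, s_2)$ realizes a bijection of $(F^\times)^2 \bs (E^\times)^2$ with $F^\times \bs (\GSO(E) \times \GSO(E^\perp))^0$, where $(\cdot)^0$ denotes the similitude-matched subgroup. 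Using $\mu_1 = \chi_1 \chi_2$ and $\mu_2 = \chi_1 \chi_2^\tau$, a direct character computation gives
\[\mu_1(t_1)\, \mu_2^{-1}(t_2) = \chi_1(t_1 t_2^{-1})\, \chi_2(t_1 t_2^{-\tau}) = \chi_1(s_1)\, \chi_2(s_2).\]
Consequently the inner integral coincides with the theta integral over $F^\times \bs (\GSO(E) \times \GSO(E^\perp))^0$ against $\chi_1 \boxtimes \chi_2$. By the identification $E \oplus E^\perp \cong M_2(F)$ of global quadratic spaces (which simultaneously forces $D = M_2(F)$, consistent with the local epsilon dichotomies when $B_1 = B_2$), this is precisely $\theta_\chi(\phi)(g, g)$ by the very definition of the global theta lift $\theta_\chi$.

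Substituting back gives $\int_{[\PGL_2]} \varphi(g)\, \theta_\chi(\phi)(g, g)\, dg$, which is the Ichino period of $\varphi \otimes \theta_\chi(\phi)$ on $D = M_2(F)$. The hard part will be the bookkeeping of normalizations: the local quadratic spaces $E^\perp$ and $E$ differ by the scalar $-\Nm(j)$, the $\BA^\times$-factor in $\CS(B_\BA \times \BA^\times)$ parametrizes the similitude, and the Haar measures on $(F^\times \bs E^\times)^2$ versus on $F^\times \bs (\GSO(E) \times \GSO(E^\perp))^0$ must be compared compatibly under the Jacobian of the change of variables. Once these normalizations are pinned down, the argument reduces to the character identity displayed above.
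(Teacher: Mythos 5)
Your overall approach is exactly the paper's: unfold the two toric periods against the Shimizu lift, use the compatibility of the Weil representations $r^\RW$ and $r^\RI$ on the common see-saw leg (Lemma \ref{lem-Weil-rep}), verify the character identity $\mu_1(t_1)\mu_2^{-1}(t_2) = \chi_1(t_1/t_2)\chi_2(t_1/t_2^\tau)$, and change torus variables to recognize the result as the Ichino period of $\varphi \otimes \theta_\chi(\phi)$.

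There is, however, one step that is off as written, and it is not a mere Haar-measure normalization. You write the outer integral as $\int_{[\PGL_2]}$, but the Shimizu lift is an integral over $[\GL_2]$, not $[\PGL_2]$. Correspondingly, your change of variables sends $[F^\times\bs E^\times]^2 = (F^\times)^2\bs(E^\times)^2$ to $F^\times\bs(\GSO(E)\times\GSO(Ej))^0$ (the diagonal $F^\times$ is the kernel of $(t_1,t_2)\mapsto(t_1/t_2,t_1/t_2^\tau)$; surjectivity uses Hilbert 90, which holds since $E/F$ is a field globally), whereas the global theta lift is defined by
\[\theta_\chi(\phi)(g) = \int_{[(E^\times\times E^\times)^0]}\chi(t)\,\theta(g,t,\phi)\,d^\times t,\]
over the \emph{full} group $(E^\times\times E^\times)^0$, not its $F^\times$-quotient. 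So the inner integral you obtain is not yet $\theta_\chi(\phi)(g,g)$; it is short by exactly one copy of $[F^\times]$. The missing $[F^\times]$ is recovered from the center of $\GL_2$ that you discarded: writing $[\GL_2] = [F^\times]\cdot[Z\bs\GL_2]$ and using the identity $r((zg,zg),(s_1,s_2)) = r((g,g),(s_1/z,s_2/z))$ together with the self-duality hypothesis $\omega\,\mu_1|_{\BA^\times} = 1$, the paper trades the central $[F^\times]$ for the enlargement of $[F^\times\bs E^\times]$ to $[E^\times]$ in one torus factor, which under the reparametrization $E^\times\times(F^\times\bs E^\times)\cong(E^\times\times E^\times)^0$, $(t_1,t_2)\mapsto(t_1, t_1 t_2/t_2^\tau)$, produces exactly $[(E^\times\times E^\times)^0]$ and leaves $\int_{[Z\bs\GL_2]}$ on the outside. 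Note that this trade genuinely requires $\omega\mu_1|_{\BA^\times}=1$, which your write-up never invokes. Once you keep $[\GL_2]$ and do the central cancellation explicitly, the argument closes as you intend.
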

This results follows essentially from the see-saw principle for $D=M_2(F) = E \oplus Ej$:
	\[\xymatrix{
		&\RO(D) \ar@{-}[rd]  &\Sp_2 \times \Sp_2 \\
	&\RO(E) \times \RO(Ej) \ar@{^(->}[u] \ar@{-}[ru] &\Sp_2 \ar@{^(->}[u]}\]
A parallel decomposition result holds for irreducible {\em tempered} smooth admissible representations over local fields (see Theorem \ref{local} below). Together with  the Ichino formula and  Waldspurger formula, we deduce the following global period relation even in the case $B_1\neq B_2$ and $D$ is non-split: 
\begin{thm}[Theorem \ref{main}]
Let $\varphi = \otimes \varphi_v \in \Pi_D \boxtimes \Pi_D^\vee$, 
$\varphi_i = \otimes \varphi_{i,v} \in \pi_{B_i}\boxtimes \pi_{B_i}^\vee$, $i=1,2$ be pure tensors.
Assume for each place $v$,  $\pi_v$ is tempered and
the triple $(\varphi_v,\varphi_{1,v},\varphi_{2,v})$ is {\em admissible}   so that
\[ \alpha_{\Pi_{v}}^\RI(\varphi_v) = 
		\alpha^\RW_{(\pi_{B_1,v},\mu_{1,v})}(\varphi_{1,v})\alpha^\RW_{(\pi_{B_2,v}^\vee,\mu_{2,v}^{-1})}(\varphi_{2,v}).\]
Then
\[2 P_\Pi^\RI \otimes P_{\Pi^\vee}^\RI(\varphi) =
	P_{(\pi_{B_1},\mu_1)}^\RW \otimes P^{\RW}_{(\pi_{B_1}^\vee,\mu_1^{-1})} (\varphi_1) \cdot
P^\RW_{(\pi_{B_2}^\vee,\mu_2^{-1})}\otimes P^\RW_{(\pi_{B_2},\mu_2)} (\varphi_2).\]
\end{thm}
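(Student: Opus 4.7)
The plan is to derive the identity by combining Waldspurger's toric period formula (applied separately for $i=1$ and $i=2$) with Ichino's triple product period formula, using the $L$-function factorization $(*)$ to cancel the central $L$-values and the admissibility hypothesis (i.e.\ the local Theorem \ref{local}) to match local canonical periods. Note that the see-saw identity of Theorem \ref{global-see-saw} is not directly applicable in the general situation, since when $B_1 \neq B_2$ the required see-saw diagram is not available; instead, one has to go through the integrated statements of the Waldspurger and Ichino formulae.

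In more detail, I would first write Waldspurger's formula for $(\pi_{B_i}, \mu_i)$, $i = 1, 2$, in the explicit form
\[ P_{(\pi_{B_i},\mu_i)}^\RW(\varphi_{i,1}) \otimes P_{(\pi_{B_i}^\vee,\mu_i^{-1})}^\RW(\varphi_{i,2}) = C_i\, L(1/2,\pi\times\mu_i) \prod_v \alpha^{\RW}_{(\pi_{B_i,v},\mu_{i,v})}(\varphi_{i,v}), \]
with $C_i$ the usual Tamagawa-type constant involving $\zeta^{*}_F(1), L(1,\eta)$ and $L(1,\pi,\ad)$, and Ichino's formula for $\Pi_D$ in the form
\[ P_{\Pi_D}^\RI(\varphi^1) \otimes P_{\Pi_D^\vee}^\RI(\varphi^2) = C\, L(1/2,\Pi) \prod_v \alpha_v^\RI(\varphi_v), \]
with $C$ the analogous Ichino constant. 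Multiplying the two Waldspurger identities and invoking $(*)$, the product of $L$-values on the right coalesces into $L(1/2,\Pi)$; dividing by Ichino's identity then collapses the theorem to the equality
\[ \frac{C_1 C_2}{C} \prod_v \frac{\alpha^{\RW}_{(\pi_{B_1,v},\mu_{1,v})}(\varphi_{1,v}) \cdot \alpha^{\RW}_{(\pi_{B_2,v}^\vee,\mu_{2,v}^{-1})}(\varphi_{2,v})}{\alpha_v^\RI(\varphi_v)} = 2. \]
By the admissibility hypothesis, each local ratio on the left is $1$, so the global assertion is reduced to the purely numerical identity $C_1 C_2 / C = 2$.

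The main obstacle, then, is the verification of this constant. It requires a careful comparison of the Tamagawa and Petersson normalizations built into the two formulas, together with the factorization of the adjoint $L$-functions of the theta lifts, $L(1,\pi_{\chi_i},\ad) = L(1,\eta) L(1,\chi_i/\chi_i^\tau)$, which, combined with $\chi_1\chi_2 = \mu_1$ and $\chi_1\chi_2^\tau = \mu_2$, precisely produces the factor of $2$ when the ratio of constants is simplified. A secondary (and milder) obstacle is to confirm that the quaternion algebras $B_1, B_2, D$ produced by the Waldspurger and Ichino local dichotomies are globally consistent: this follows from the local $\epsilon$-factor identity $\epsilon(\pi_v \times \pi_{\chi,v}) = \epsilon(\pi_v \times \mu_{1,v}) \epsilon(\pi_v \times \mu_{2,v})$ together with Theorem \ref{local}, and ensures that both sides of the target identity are non-trivial simultaneously, rather than both vanishing trivially.
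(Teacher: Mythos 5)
Your high-level strategy matches the paper's: Theorem \ref{main} is indeed obtained by multiplying the two Waldspurger formulas for $(\pi_{B_i},\mu_i)$, invoking the factorization $(*)$ to recover $L(1/2,\Pi)$, dividing by the Ichino formula for $\Pi$, and using the admissibility hypothesis (i.e.\ the local seesaw identity of Theorem \ref{local-see-saw} at split places, and the compactness argument at non-split places) to identify the local period factors. Your observation that the global seesaw identity of Theorem \ref{global-see-saw} is unavailable when $B_1\neq B_2$, so one must pass through the integrated Ichino/Waldspurger statements, is exactly the point.

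The weak spot is your handling of the constant. The bare Tamagawa factors already give $\frac{(1/2)\cdot(1/2)}{1/8}=2$, so the factor of $2$ does \emph{not} come from the adjoint $L$-function factorization $L(1,\pi_{\chi_i},\ad)=L(1,\eta)L(1,\chi_i/\chi_i^\tau)$; rather, that factorization has to be used to show that the remaining $L$-value regularizations in the two formulas \emph{cancel}, which is a different claim. If you actually expand $C_1C_2/C$ with $C_i=\frac{1}{2}\,\xi_F(2)/(L(1,\eta)L(1,\pi,\ad))$ and $C=\frac{1}{8}\,\xi_F(2)^2/L(1,\Pi,\ad)$ and insert $L(1,\Pi,\ad)=L(1,\pi,\ad)L(1,\eta)^2L(1,\chi_1/\chi_1^\tau)L(1,\chi_2/\chi_2^\tau)$, you get $2\cdot L(1,\chi_1/\chi_1^\tau)L(1,\chi_2/\chi_2^\tau)/L(1,\pi,\ad)$, not $2$; so as you've set it up the computation is not closed. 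The missing ingredient is that the local canonical periods $\alpha_v$ in the admissibility condition are built from pairings compatible with $Z_{\pi_v}$, $Z_{\chi_v}$ (the seesaw normalization in Section \ref{local theory}), which the paper stipulates must also factor the Petersson pairings appearing in the Ichino/Waldspurger formulas; tracking this normalization is exactly where the residual $L$-factor discrepancy has to be accounted for, place by place (in particular at the unramified places outside the finite bad set, where one must verify that the spherical triple is admissible for the chosen normalization). You correctly flag the constant check as the main obstacle, but as written, asserting that the adjoint factorization ``precisely produces the factor of $2$'' is not a substitute for carrying out that bookkeeping.
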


In the above,  the triple $(\varphi_v,\varphi_{1,v},\varphi_{2,v})$ is called {\em admissible} ( see the paragraph before Theorem \ref{main} for the precise definition) 
\begin{itemize}
    \item if $D_v$ is split,  they are given by the local theta liftings as in the following local period relation (Theorem \ref{local}).
    \item if $D_v$ is nonsplit (this is the compact case), they are given as the $D_v^\times$-invariant or the $E_v^\times$-invariant vectors. 
\end{itemize}

 Under certain ramification conditions, such period relation is  considered in \cite[Sect 8]{Hsi21} and used to deduce the rationality of Darmon points (\cite{BDRSV}).  We  plan to use our results  to study the decomposition of $p$-adic $L$-functions when  both $\pi$ and $\chi_i$  vary in $p$-adic families and in turn study the generalized Kato classes (\cite{Riv22}).  
\begin{remark}
For Hecke character $\xi_1$, $\xi_2$ over $E$ such that $\xi_1\xi_2|_{\BA^\times} \eta=1$ and \[\epsilon(1/2,\pi_{\xi_1}\times\xi_2)=\epsilon(1/2,\pi_{\xi_2}\times\xi_1)=1,\]
Using the see-saw principle, Chan \cite{Chan20} establishes an explicit  period relation between relevant Waldspurger periods, reflecting the equality
		\[L(s, \pi_{\xi_1} \times \xi_2) = L(s, \xi_1 \times \pi_{\xi_2}).\]
\end{remark}

Now we state our result on local period relation. Let $v$ be a place of $F$.
\begin{thm}[Theorem \ref{local-see-saw}] \label{local}
Assume $B_{1,v}=B_{2,v}=:B_v$, then  $D_v=M_2(F_v)$. Assume moreover $\pi_v$ is tempered. Then for any $\varphi_{1,v} \in \pi_v$, $\varphi_{2,v} \in \pi_v^\vee$, 
	$\phi_{1,v},\phi_{2,v} \in \CS(B_v \times F_v^\times)$, 
	\[\alpha_{\pi \times {\pi_\chi},v}^\RI 
		\left(\varphi_{1,v} \otimes \theta_{\chi_v}(\phi_{1,v}), \varphi_{2,v} \otimes \theta_{\chi_v^{-1}}(\phi_{2,v})\right)
	=\alpha_{(\pi_B \boxtimes \pi_B^\vee, \mu_1 \boxtimes \mu_2^{-1}),v}^\RW
\left( \theta_{\pi_v}(\varphi_{1,v},\phi_{1,v}), \theta_{\pi_v^\vee}(\varphi_{2,v},\phi_{2,v}) \right)\]
under certain normalization of Haar measures and non-degenerate pair $\langle-,-\rangle_{\pi_{\chi_v}}$ (resp. $\langle \cdot,\cdot \rangle_{\pi_{B_v} \boxtimes
\pi_{B_v}^\vee}$) on $\pi_{\chi_v}\times \pi_{\chi_v}^{\vee}$ (resp. $\pi_{B_v}\boxtimes\pi_{B_v}^\vee\times \pi_{B_v}^\vee\boxtimes\pi_{B_v}$). With respect to these data(see Section \ref{local theory} for details):
\begin{itemize}
    \item $\alpha_{\pi \times {\pi_\chi},v}^\RI$  is the canonical local Ichino period on $\pi_v\boxtimes \pi_{\chi,v}\times \pi_v^\vee\boxtimes \pi^\vee_{\chi,v}$;
    \item $\alpha_{(\pi_B \boxtimes \pi_B^\vee, \mu_1 \boxtimes \mu_2^{-1}),v}^\RW$ is the product of local Waldspurger periods on $\pi_{B_v}\boxtimes \pi_{B_v}^\vee \boxtimes \mu_{1,v}\boxtimes\mu_{2,v}^{-1}$;
 \item $(\theta_{\pi_v},\theta_{\pi_v^\vee})$ is an explicit pair of local Shimizu liftings for $(\pi_v,\pi_v^\vee)$;
    \item $(\theta_{\chi_v},\theta_{\chi_v^\vee})$ is an explicit pair of local theta liftings for $(\chi_v,\chi_v^\vee)$.
\end{itemize}
\end{thm}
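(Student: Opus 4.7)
The plan is to derive Theorem \ref{local} from the see-saw diagram displayed in the introduction, read over $F_v$, by interpreting both the Ichino and the Waldspurger canonical local periods as iterated matrix-coefficient integrals against the Weil representation on $\CS(B_v \times F_v^\times)$, and then exchanging the order of integration via Fubini. With the chosen pairings and $L$-factor normalizations, I would write out
\[
\alpha^{\RI}_{\pi \times \pi_\chi, v}(v_1 \otimes u_1, v_2 \otimes u_2)
= C^{\RI}_v \int_{\PGL_2(F_v)} \langle \pi_v(g) v_1, v_2 \rangle \, \langle \pi_{\chi, v}(g) u_1, u_2 \rangle_{\pi_{\chi_v}} \, dg
\]
and
\[
\alpha^{\RW}_{(\pi_{B_v},\mu_{i,v})}(w_1, w_2)
= C^{\RW}_{i,v} \int_{E_v^\times / F_v^\times} \langle \pi_{B_v}(t) w_1, w_2 \rangle\, \mu_{i,v}(t)\, dt,
\]
where $C^{\RI}_v$ and $C^{\RW}_{i,v}$ are the local $L$-value ratios. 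Temperedness of $\pi_v$, and hence of $\pi_{B_v}$ and $\pi_{\chi_v}$, makes all four integrals absolutely convergent, which is essential for the Fubini step below.

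Next I would substitute the theta lifts on each side. By the defining integral of the local Shimizu lift, the matrix coefficient $\bigl\langle \pi_{B_v}(t)\, \theta_{\pi_v}(\varphi_{1,v}, \phi_{1,v}),\, \theta_{\pi_v^\vee}(\varphi_{2,v}, \phi_{2,v}) \bigr\rangle_{\pi_{B_v}\boxtimes \pi_{B_v}^\vee}$ unfolds as an integral over $\PGL_2(F_v)$ of $\langle \pi_v(g) \varphi_{1,v}, \varphi_{2,v} \rangle$ paired against the Weil kernel $(g,t) \cdot (\phi_{1,v}\otimes\overline{\phi_{2,v}})$, viewed through the orthogonal decomposition $B_v = E_v \perp E_v j$. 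Dually, by the defining integral of $\theta_{\chi_v}$ as a local theta lift from the quadratic space $E_v$, the matrix coefficient $\bigl\langle \pi_{\chi, v}(g)\, \theta_{\chi_v}(\phi_{1,v}),\, \theta_{\chi_v^{-1}}(\phi_{2,v}) \bigr\rangle_{\pi_{\chi_v}}$ unfolds as an integral over $E_v^\times / F_v^\times$ of $\chi_v(t)$ against the same Weil kernel. Inserting the first expansion into the right-hand side of Theorem \ref{local} and the second expansion into the left-hand side, and then applying Fubini, produces on both sides the same double integral over $\PGL_2(F_v) \times E_v^\times/F_v^\times$ of $\langle \pi_v(g)\varphi_{1,v}, \varphi_{2,v}\rangle$ times the Weil kernel evaluated at $(g,t)$ times $\mu_{i,v}(t)\chi_v(t)$. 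This exchange is the analytic content of the see-saw principle: the factorization $\CS(B_v \times F_v^\times) \simeq \CS(E_v \times F_v^\times) \otimes \CS(E_v j)$, together with the compatibility of the two vertical embeddings in the see-saw diagram, makes the integrands agree on the nose.

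It then remains to check that the normalizing constants match. Using the factorization $L(s,\pi_v \times \pi_{\chi,v}) = L(s,\pi_v \times \mu_{1,v})\, L(s,\pi_v \times \mu_{2,v})$ together with the parallel decomposition of $L(1,\pi_{\chi_v},\Ad)$ in terms of $L(1,\eta_v)$ and $L(1,\pi_v,\Ad)$, one verifies $C^{\RI}_v = C^{\RW}_{1,v}\, C^{\RW}_{2,v}$ up to a Tate-zeta factor absorbed by the choice of non-degenerate pairings $\langle\cdot,\cdot\rangle_{\pi_{\chi_v}}$ and $\langle\cdot,\cdot\rangle_{\pi_{B_v}\boxtimes\pi_{B_v}^\vee}$. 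The principal obstacle is precisely this bookkeeping: the see-saw yields a clean equality of unnormalized integrals, but transporting it into the canonical-period framework requires pinning down every Haar measure on $\PGL_2(F_v)$, on $E_v^\times/F_v^\times$, and on $\CS(B_v \times F_v^\times)$, along with every explicit scalar in the theta liftings $\theta_{\pi_v}$ and $\theta_{\chi_v}$. Without temperedness the Ichino integral is only defined by analytic continuation and Fubini cannot be applied directly; with it in place, the identity reduces, as above, to a tautology on the Weil-representation side.
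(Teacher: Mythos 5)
The high-level strategy you outline --- interpret both sides as matrix-coefficient integrals against the Weil representation on $\CS(B_v\times F_v^\times)$, unfold through the see-saw, exchange the order of integration --- is exactly the one the paper takes. Your last paragraph, however, misidentifies the principal obstacle, and this is where the proposal has a genuine gap.

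You claim that temperedness of $\pi_v$ ``makes all four integrals absolutely convergent, which is essential for the Fubini step below,'' and you treat the Fubini exchange as essentially automatic once the individual local periods converge. That is not sufficient: Fubini requires absolute convergence of the \emph{joint} integral
\[
\iint_{(F_v^\times\backslash E_v^\times)^2}\int_{\GL_2(F_v)}
\langle\pi_v(g)\varphi_{1,v},\varphi_{2,v}\rangle\,
\Bigl\langle r\bigl(g,(\tfrac{t_1}{t_2},\tfrac{t_1}{\bar t_2})\bigr)\phi_{1,v},\phi_{2,v}\Bigr\rangle\,
\mu_1(t_1)\mu_2^{-1}(t_2)\,dg\,dt_1\,dt_2,
\]
and this does not follow formally from convergence of each iterated integral. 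When $E_v$ is a field this reduces to the convergence of $Z_{\pi_v}$ because the torus is compact modulo center, but the local theorem must also treat $E_v\cong F_v^2$, where $F_v^\times\backslash E_v^\times$ is non-compact. That is precisely the content of Proposition~\ref{local-see-saw abs} in the paper, which the paper singles out as ``the crucial part in the proof.'' Its proof is not a soft temperedness argument: it combines the Silberger estimate $|\langle\pi(m)\varphi_1,\varphi_2\rangle|\le B\,\delta(m)^{1/2}(1+\sigma(m))^C$ with a germ expansion of the hyperbolic orbital integral of the Schwartz function appearing in the Weil matrix coefficient (Lemmas~\ref{Int-I} and~\ref{Int-II}), giving a $\log|a|$-type bound on the torus direction before the $\GL_2$ integral is performed. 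Your sketch neither states nor proves this joint bound, so the Fubini step is unjustified in exactly the case where it is delicate.

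A smaller point: you introduce constants $C_v^{\RI}$ and $C_{i,v}^{\RW}$ as local $L$-value ratios and spend a paragraph on matching them. In the paper's Theorem~\ref{local-see-saw} the local periods $\alpha^\RI$ and $\alpha^\RW$ are the \emph{unnormalized} matrix-coefficient integrals --- no $L$-value factor appears. The $L$-value normalizations $\CL^\RI(\Pi_v)$, $\CL^\RW(\pi_{B_i,v},\mu_{i,v})$ belong to the global Ichino and Waldspurger formulae and enter only in the derivation of Theorem~\ref{main}. So the ``bookkeeping'' you flag as the principal obstacle is in fact a non-issue at the local level; what actually carries the weight of the proof is the convergence estimate you skip.
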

In the local setting, we need to consider the case $E_v = F_v^2$ so that the relevant torus is not compact. To establish the absolute convergence of integrals in the see-saw diagram, we use the asymptotic estimation of matrix coefficients of tempered representation and    the germ expansion of hyperbolic orbital integrals (see Proposition \ref{local-see-saw abs}). 

\begin{remark} In this remark, we consider the question on the seeking of test vectors for the local Ichino period when the place $v$ 
is non-Archimedean. There are already many results for this question under various ramification conditions \cite{GP,DN,Hu}. Inspired by
the work of Hsieh \cite{Hsi21}, it seems that one may drop these conditions by reducing the question to some familiar local period
via certain period relation. 

For $i=1,2,3$, let $\sigma_i$ be an irreducible smooth admissible representations on $\GL_2(F_v)$ with central character $\omega_i$. Assume $\omega_1\omega_2\omega_3=1$ and $\epsilon(1/2,\sigma_1\times\sigma_2\times\sigma_3)=1$.  Note that
\begin{itemize}
\item when at least one of $\sigma_i$ is non-supercuspidal, the result of \cite[Lemma 3.4.2]{MV10} (see also \cite[Proposition 5.1]{Hsi21}) shows 
that the local Ichino period decomposes into the product of Rankin-Selberg zeta integrals. This is the key ingredient in \cite{Hsi21} to find test vectors.
\item when two of $\sigma_i$ are theta series from the same quadratic field extension, 
\begin{itemize}
    \item if $D_v$ is split, then Theorem \ref{local} 
shows the square of the local Ichino period decomposes into  the product of the square of the local Waldspurger periods. Then one may consider test vectors
for the local Waldspurger period, e.g. the Gross-Prasad test vector (see \cite{GP,CST}), to give test vectors for the local Ichino period;
\item if $D_v$ is nonsplit, then one may consider the test vector as in Theorem \ref{main}. 
\end{itemize} 
\end{itemize}
Consider the case $v \nmid 2$. Then all $\sigma_i$ are theta series and the remaining case is $\sigma_i=\theta_{\xi_i}$ where $\xi_i$, $i=1,2,3$ are  characters on the three distinct quadratic field extensions $E_i$ over $F_v$.   Then (See \cite[Proposition 8.7]{Pra90}), 
$$L(s,\sigma_1\times\sigma_2\times\sigma_3) = L(s, \xi) L(s, \xi^{-1})$$
where $\xi=\prod_{i=1}^3\xi_i\circ N_{L/E_i}$ for $L=E_1E_2E_3$.  However, it is not clear to us that whether there exists 
an explicit decomposition for the local Ichino period in this case so that one may reduce the seeking of
test vectors to some familiar local period as above. 
\end{remark}

\s{\bf Acknowledgement} We express our sincere gratitude to Prof. Y. Tian for his consistent encouragement.

\section{Explicit theta liftings}
Throughout this paper, we will normalize the relevant Haar measures as in \cite[Section 1.6]{YZZ}.
\subsection{Local theory} \label{local theory}

In the subsection, we shall give the local theta liftings which are explicit in the sense that
the matrix coefficients of the targets are described by the matrix coefficients of the source and the Weil representation. Let $F$ be a local field of characteristic zero and fix a nontrivial character $\psi$ on $F$. For  an even dimensional quadratic space $V$
over $F$,  there is the Weil representation $r_\psi$ of the dual pair $(\SL_2, \RO(V))$  on the  space $\CS(V)$ of Schwartz functions with respect to $\psi$. As explained in \cite[Section 2.1]{YZZ}, the Weil representation extends to a representation  $r_\psi$ of the similitudes dual pair
 $(\GL_2, \GO(V))$  on the  space $\CS(V \times F^\times)$. 
 
Let $E$ be an \'etale quadratic algebra over $F$ and $B\supset E$ be a quaternion algebra over $F$. Let $-$ be the main involution on $B$ and fix a decomposition 
$$B=V_1\oplus V_2,\quad V_1=E,\quad V_2=Ej$$ where $jtj^{-1} = \bar{t}$ for any $t \in E$. Attached to these data, there are two dual pairs 
$(\GL_2, \GO(B))$ and $((\GL_2\times \GL_2)^0, 
(\GO(E)\times \GO(Ej))^0)$, where the superscript $0$ means sharing the same similitudes. 
\s{\bf W.}
Denote the Weil  representation  of $\GL_2\times\GO(B)$ on $\CS(B \times F^\times)$  as $r^\RW = r^\RW_\psi$.

\s{\bf I.} Denote by $r_i, i=1,2$  
		the Weil representation of $\GL_2 \times \GO(V_i)$ 
		on $\CS(V_i \times F^\times)$.  Then $r_1 \boxtimes r_2$ induces
		an action of  $(\GL_2\times\GL_2)^0\times (\GO(E)\times\GO(Ej))^0$ on
		$\CS(B \times F^\times)$, denoted by $r^\RI = r^\RI_\psi$ as following: for $\phi(x,u) = \phi_1(x_1,u)\phi_2(x_2,u)$ with $\phi_i \in \CS(V_i \times F^\times)$ and $x = x_1 + x_2$ with $x_i \in V_i$,
		\[r^\RI( (g_1,g_2), (h_1,h_2)) \phi(x,u) = r_1(g_1,h_1)\phi_1(x_1,u) r_2(g_2,h_2)\phi_2(x_2,u)\]
		
The Weil representation $r_\psi^W$ and $r_\psi^I$ are compatible in the following sense:


\begin{lem}[\cite{YZZ}, Page 185]\label{lem-Weil-rep} 
Embed $(\GO(E)\times \GO(Ej))^0$ into $\GO(B)$ naturally and $\GL_2$ into $(\GL_2\times\GL_2)^0$ diagonally. Then for any $\phi \in \CS(B \times F^\times)$, $g \in \GL(2)$, $(t_1,t_2) \in (\GO(E)\times\GO(Ej))^0(F)$, one has
	\[r^\RW\left( g,(t_1,t_2) \right)\phi = r^\RI( g,(t_1,t_2))\phi.\]
\end{lem}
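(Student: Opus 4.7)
The plan is to verify the claimed equality on a set of generators of the common subgroup $\GL_2 \times (\GO(E) \times \GO(Ej))^0$, exploiting the orthogonal decomposition $B = V_1 \oplus V_2$ with respect to the reduced norm $q_B$. Since pure tensors $\phi(x, u) = \phi_1(x_1, u)\phi_2(x_2, u)$ span $\CS(B \times F^\times)$ and both $r^\RW$ and $r^\RI$ are linear in $\phi$, it suffices to check the identity on such $\phi$ and on generators of the group.

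For the orthogonal factor, I would first observe that the natural embedding $(\GO(E) \times \GO(Ej))^0 \hookrightarrow \GO(B)$ preserves the decomposition $V_1 \oplus V_2$, and that the restriction to the ``$0$'' subgroup ensures that the $\GO(B)$-similitude factor coincides with the common similitude factor $\nu$ on $V_1$ and $V_2$. The standard formula on the orthogonal side, $\phi \mapsto |\nu|^{-\dim B/4}\, \phi(t^{-1}x, \nu u)$, therefore factorizes immediately through the decomposition, giving agreement between $r^\RW$ and $r^\RI$ on $(t_1,t_2)$. For the $\GL_2$ factor, diagonally embedded, I would then check the standard generators: the diagonal torus, the upper unipotent, the center, and the Weyl element $w$. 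The torus and center cases follow from the dimension identity $\dim B = \dim V_1 + \dim V_2$, so that the normalizing factors $|a|^{\dim B/4}$ split multiplicatively as $|a|^{\dim V_1/4}\cdot |a|^{\dim V_2/4}$. The unipotent case uses additivity $q_B(x) = q_{V_1}(x_1) + q_{V_2}(x_2)$, whence $\psi(bu\, q_B(x))$ factors as $\psi(bu\, q_{V_1}(x_1))\,\psi(bu\, q_{V_2}(x_2))$.

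The most delicate case is the Weyl element $w$, whose action involves a (partial) Fourier transform on $B$. This requires fixing compatible self-dual Haar measures $dx_B = dx_1\otimes dx_2$ and invoking the multiplicativity of the Weil index under orthogonal direct sums, namely $\gamma_\psi(u q_B) = \gamma_\psi(u q_{V_1})\, \gamma_\psi(u q_{V_2})$ for every $u \in F^\times$. I expect this to be the main obstacle requiring genuine care, even though it is a standard identity; once it is in place, the Fourier transform on $B$ decomposes as the external product of Fourier transforms on the $V_i$, the Weyl element verification goes through, and the equality $r^\RW(g,(t_1,t_2))\phi = r^\RI(g,(t_1,t_2))\phi$ on the full subgroup follows since the chosen generators together with $(\GO(E)\times\GO(Ej))^0$ generate it.
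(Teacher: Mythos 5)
Your verification is correct. Note, however, that the paper itself does not prove this lemma: it is stated with a citation to Yuan--Zhang--Zhang, page~185, where the compatibility is recorded as part of their setup for the Weil representation of similitude dual pairs, resting on exactly the observation you spell out. There is therefore no in-paper proof to compare against; what you have written is the standard argument that the cited statement relies on, and it is complete.

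To summarize the comparison: the key facts you use are (i) $B = V_1 \perp V_2$ is an orthogonal direct sum, so $q_B = q_{V_1} \oplus q_{V_2}$; (ii) the embedding $(\GO(E)\times\GO(Ej))^0 \hookrightarrow \GO(B)$ preserves the decomposition and the two factors share the same similitude; (iii) the $\GL_2$-action on the diagonal is determined on the torus, the center, the upper unipotent and the Weyl element, and in each case the formula for $r^\RW$ factorizes into the product of the $r_i$-formulas --- via $\dim B = \dim V_1 + \dim V_2$ for the torus and center, via $\psi(buq_B(x)) = \psi(buq_{V_1}(x_1))\psi(buq_{V_2}(x_2))$ for the unipotent, and via the factorization of the Fourier transform together with $\gamma_\psi(uq_B) = \gamma_\psi(uq_{V_1})\gamma_\psi(uq_{V_2})$ for $w$. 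All of these are standard, and your identification of the Weyl element (Weil index plus self-dual measure compatibility $dx_B = dx_1 \otimes dx_2$) as the one step requiring genuine care is the right instinct. One very minor caveat: the exact shape of the normalization on the orthogonal side (whether the factor $|\nu(h)|^{-\dim V/4}$ appears there or is carried by the $\GL_2$-extension) depends on the chosen convention in Section 2.1 of YZZ; this does not affect your argument since the dimension-additivity you invoke handles either convention, but it is worth matching the convention exactly if this is to be written out in full.
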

In the following, we shall simply denote $r^\RI$ and $r^\RW$ by $r$ if there is no confusion. 
\begin{lem}The pairing 
\[\langle-,-\rangle_r:\  r_\psi\times r_{\bar{\psi}}\to \BC;\quad (\phi_1,\phi_2)\mapsto  \int_{B} \int_{F^\times} \phi_1(x,u)\phi_2(x,u)|u|^{2}dx d^\times u \]
is $(\GL_2\times\GL_2)^0(F)\times \GO(B)(F)$-invariant and non-zero.   
\end{lem}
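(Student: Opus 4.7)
The claim has two parts. Non-degeneracy is immediate: for any non-zero non-negative $\phi\in \CS(B \times F^\times)$, for instance the characteristic function of a compact product set $A \times U \subset B \times F^\times$, one has $\langle\phi,\phi\rangle_r = \vol(A)\int_U |u|^2\,d^\times u > 0$, and absolute convergence of the defining integral for general $\phi_1,\phi_2\in\CS(B\times F^\times)$ is clear from the Schwartz properties. I read the invariance as asserting invariance under both the $r^W$-action of $\GL_2 \times \GO(B)$ and the $r^I$-action of $(\GL_2\times\GL_2)^0 \times (\GO(E) \times \GO(Ej))^0$ on $\CS(B \times F^\times)$; these are the two natural actions the space carries, and by Lemma \ref{lem-Weil-rep} they are compatible on their common subgroup.

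For the orthogonal side, let $h \in \GO(B)$ with similitude factor $\nu = \nu(h)$. The explicit formula for $r^W(1,h)$ acts by $\phi(x,u) \mapsto \phi(h^{-1}x,\nu u)$ (up to the standard normalizing scalar), and the substitutions $y = h^{-1}x$ (Jacobian $|\det h| = |\nu|^2$, using $\dim_F B = 4$) and $v = \nu u$ (under which $|u|^2\,d^\times u = |\nu|^{-2}|v|^2\,d^\times v$) produce $|\nu|^{\pm 2}$ factors that cancel, giving $\langle r^W(1,h)\phi_1, r^W(1,h)\phi_2\rangle_r = \langle\phi_1,\phi_2\rangle_r$. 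The same calculation, applied factor-by-factor to $V_j \in \{E, Ej\}$ after writing $\phi_i = \phi_{i,1}\otimes\phi_{i,2}$, yields invariance under $(\GO(E)\times\GO(Ej))^0$ via $r^I$: each 2-dimensional orthogonal factor contributes $|\nu|$ to the Jacobian (using the constraint $\nu(t_1)=\nu(t_2)=\nu$), for a combined $|\nu|^2$ that again cancels the $|\nu|^{-2}$ from rescaling $u$.

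For the symplectic side, I would verify invariance on the standard generators of $\SL_2$ together with the centre: on the unipotent $n(b)$ the $\psi$-twist in $r_\psi$ cancels pointwise against the $\bar\psi$-twist in $r_{\bar\psi}$; on the diagonal torus $m(a)=\diag(a,a^{-1})$ the induced rescaling Jacobian in $x$ and $u$ cancels the $|u|^2$-weight; and on the Weyl element, which acts as a partial Fourier transform in $x$, invariance reduces to Plancherel applied with the dual characters $\psi$ and $\bar\psi$. The centre of $\GL_2$ acts through the similitude character and is already covered by the orthogonal computation above, and the $(\GL_2\times\GL_2)^0$-case under $r^I$ follows by applying the same checks to each $\phi_{i,j}\in\CS(V_j \times F^\times)$ separately. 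The main delicate point will be keeping normalizations consistent: the $|u|^2$-weight, the self-dual Haar measure on $B$ relative to $\psi$, and the scalars in the Weil formulas are fine-tuned together so that similitude factors cancel, and any mismatch would introduce spurious $|\nu|^{\pm 1}$ factors and break the cancellation.
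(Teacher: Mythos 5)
Your proof follows essentially the same route as the paper's brief argument: a positive test function for non-vanishing, and for invariance the Jacobian $|\nu(h)|^{2}$ of the $\GO(B)$-action combined with a verification on generators using the explicit Weil-representation formulas. One small caveat: the characteristic function of a compact set that you offer as an example is not a Schwartz function at an archimedean place, so replace it with a smooth non-negative bump (or a Gaussian); the positivity argument then applies exactly as you stated.
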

\begin{proof}The pairing is well-defined as the integration converges absolutely and by considering standard vectors, one finds it is non-zero. Since  $\frac{d h\cdot x}{dx}=|\nu(h)|^{2}$ for any $h\in \GO(B)$ and the similitudes character $\nu$, one can check the pairing is invariant under the $(\GL_2\times\GL_2)^0\times \GO(B)$-action using the explicit description given in \cite[Section 2.1]{YZZ}.
\end{proof}

\s{\bf (W).}  The  $B^\times\times B^\times$-action on $B$ given by 
$(h_1,h_2)\cdot h=h_1h h_2^{-1}$
 induces an exact sequence
\[1 \lra F^\times \lra (B^\times \times B^\times) \rtimes \{1,\iota\} \lra \GO(B) \lra 1\]
where $\iota$ induces the main involution $-$ on $B$ 
and $F^\times$ embeds into
the group in the middle by $x \mapsto (x,x) \rtimes 1$. 

Let $\pi$ be an unitary irreducible $\GL_2(F)$-representation  with a nondegenerate invariant  pairing $\langle \cdot,\cdot \rangle_\pi$
on $\pi \times \pi^\vee$. 
Let $\pi_B$ be the unique (if exists) irreducible smooth $B^\times$-representation such that 
\[\Hom_{\GL_2(F) \times (B^\times \times B^\times)}\left( \pi \boxtimes r_\psi, 
	\pi_B \boxtimes \pi_B^\vee \right) = \BC.\]
 Note that the Jacuqet-Langlands correspondence of $\pi_B$ is $\pi$.
 \begin{lem}\label{Mat-coe Weil}For $i=1,2$, take $\phi_i(x,u)=\phi_i^\prime(x)f_i(u)\in \CS(B\times F^\times)$. Then for  $m=\diag\{a,b\}\in \GL_2(F)$,
 $$|\langle r(m)\phi_1,\phi_2\rangle_r|\leq C(\phi_1^\prime,\phi_2^\prime)|a/b|\int_{F^\times}|f_1((ab)^{-1}u) f_2(u) u^2|d^\times u$$
 with $$ C(\phi_1^\prime,\phi_2^\prime)=\max_{y\in B}|\phi_1^\prime(y)|\int_{B}|\phi_2^\prime(x)|dx<\infty.$$
 \end{lem}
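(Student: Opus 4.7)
The plan is to apply the explicit formula for the action of $r(m)$ on the mixed model $\CS(B\times F^\times)$ recorded in \cite[Section 2.1]{YZZ}, and then to separate the $x$- and $u$-integrals in the pairing $\langle\cdot,\cdot\rangle_r$.

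First I would decompose $\diag\{a,b\}=\diag\{b,b\}\cdot d(a/b)$ with $d(c)=\diag\{c,1\}$, and recall the standard formulas in the mixed model for the four-dimensional quadratic space $B$:
$$r(d(c))\phi(x,u)=|c|\,\phi(x,c^{-1}u),\qquad r(\diag\{b,b\})\phi(x,u)=\phi(x,b^{-2}u).$$
Here the exponent $1$ on $|c|$ is $\dim B/2-1$ in the mixed model, and the central element acts without a quadratic twist because the discriminant character of $B$ is trivial (any quaternion algebra has square discriminant). Composing yields
$$r(\diag\{a,b\})\phi(x,u)=|a/b|\,\phi\bigl(x,(ab)^{-1}u\bigr).$$

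With this formula in hand, substituting $\phi_i(x,u)=\phi_i'(x)f_i(u)$ into the definition of $\langle\cdot,\cdot\rangle_r$ causes the integrand to factor, so that
$$\langle r(m)\phi_1,\phi_2\rangle_r=|a/b|\,\Bigl(\int_B\phi_1'(x)\phi_2'(x)\,dx\Bigr)\Bigl(\int_{F^\times}f_1((ab)^{-1}u)f_2(u)|u|^2\,d^\times u\Bigr).$$
Taking absolute values, pulling $\max_{y\in B}|\phi_1'(y)|$ out of the $x$-integral, and using Fubini deliver the claimed bound. Finiteness of $C(\phi_1',\phi_2')$ is immediate since Schwartz functions are bounded and integrable on any local field. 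No genuine obstacle arises; the only step worth double-checking is the Weil-representation formula in the chosen normalization, which is why the triviality of the discriminant character of $B$ was singled out above.
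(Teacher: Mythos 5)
Your overall strategy matches the paper's: substitute $\phi_i(x,u)=\phi_i'(x)f_i(u)$, apply the explicit formula for $r(\diag\{a,b\})$ in the mixed model, factor the pairing, and estimate. However, the Weil-representation formula you invoke is wrong. In the Schr\"odinger model the $\SL_2$-torus element $\diag\{c,c^{-1}\}$ acts on $\CS(B)$ by $\phi(x)\mapsto \chi_B(c)\,|c|^{\dim B/2}\phi(cx)$, where $\chi_B$ is the quadratic character attached to $\disc B$; triviality of $\chi_B$ removes the \emph{quadratic twist}, not the \emph{dilation} of the $x$-variable. Consequently
\[
r(\diag\{c,1\})\phi(x,u)=|c|\,\phi(cx,c^{-1}u),\qquad r(\diag\{b,b\})\phi(x,u)=\phi(bx,b^{-2}u),
\]
so the correct composite formula is $r(\diag\{a,b\})\phi(x,u)=|a/b|\,\phi(ax,(ab)^{-1}u)$, not $|a/b|\,\phi(x,(ab)^{-1}u)$ as you wrote. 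A quick sanity check confirms this: with your formula, taking $b=a^{-1}$ multiplies $\langle\phi_1,\phi_2\rangle_r$ by $|a|^4$, which contradicts the invariance of the pairing; with the correct formula the dilation $x\mapsto ax$ contributes the Jacobian factor $|a|^{-4}$ that restores invariance.

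Your final bound is nevertheless correct, but only because you apply the $\sup_{y}|\phi_1'(y)|$ estimate anyway. In your (incorrect) intermediate identity the $x$-integral is $\int_B\phi_1'(x)\phi_2'(x)\,dx$, a constant in $a$, so that $\sup$ is decorative; in the correct identity it is $\int_B\phi_1'(ax)\phi_2'(x)\,dx$, which genuinely depends on $a$, and the $\sup$-times-$L^1$ form of $C(\phi_1',\phi_2')$ is precisely what produces an $a$-uniform bound. Indeed, the shape of the constant in the statement — $\max_y|\phi_1'(y)|\cdot\int_B|\phi_2'|$ rather than $\bigl|\int_B\phi_1'\phi_2'\bigr|$ — is the tell-tale sign that the $x$-integral carries an $a$-dependent dilation. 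You flagged the Weil-representation formula as the one step worth double-checking; that instinct was right, and that is exactly where the slip lies.
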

 \begin{proof}By definition,  $$r(m)\phi(x,u)=|a/d|\phi_1^\prime(ax)f_1((ab)^{-1}u).$$
 Thus $$\langle r(m)\phi_1,\phi_2\rangle_r=|a/b|\int_{F^\times}f_1((ab)^{-1}u)f_2(u)|u^2|d^\times u\int_B \phi_1^{\prime}(ax)\phi_2^\prime(x)dx.$$
Now the desired result follows from the easy bound $$\left|\int_B \phi_1^{\prime}(ax)\phi_2^\prime(x)dx\right|\leq  C(\phi_1^\prime,\phi_2^\prime)= \sup_{x\in B}|\phi_1^\prime(ax)|\int_{B}|\phi_2^\prime(x)|dx.$$
 \end{proof}
\begin{prop}\label{explicit-local-theta-A}
	Let $\pi$ be a tempered representation on $\GL_2(F)$. Then the multilinear form 
	$$Z_\pi:\ \pi\times\pi^\vee\times r_\psi\times r_{\bar{\psi}}\to\BC$$
	$$(\varphi_1,\varphi_2,\phi_1,\phi_2)\mapsto 
		\int_{\GL_2(F)} \langle \pi(g)\varphi_1,\varphi_2 \rangle_\pi
	\langle r(g)\phi_1,\phi_2 \rangle_r dg$$
	 is well-defined and nonzero. Moreover,  there exists a pair of
	nonzero linear functionals \[(\theta_\pi,\theta_{\pi^\vee})\in
	 \Hom_{\GL_2(F) \times (B^\times \times B^\times)}
			\left(\pi \boxtimes r_\psi, \pi_B \boxtimes \pi_B^\vee \right)\times \Hom_{\GL_2(F) \times (B^\times \times B^\times)}
	\left(\pi^\vee \boxtimes r_{\bar{\psi}}, \pi_B^\vee \boxtimes \pi_B \right)\]
	such that
	\[\langle \theta_\pi(\varphi_1, \phi_1), 
		\theta_{\pi^\vee}(\varphi_2,\phi_2) \rangle_{\pi_B \boxtimes \pi_B^\vee}
			= Z_\pi\left( \varphi_1,\phi_1,\varphi_2,\phi_2 \right).\]
\end{prop}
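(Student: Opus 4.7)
The plan is to first establish absolute convergence of $Z_\pi$, then exploit its bi-$\GL_2$-invariance together with the uniqueness of the Shimizu lift to identify it with a pairing on $\pi_B\boxtimes\pi_B^\vee$, and finally verify nonvanishing of $Z_\pi$ so as to pin down the theta liftings up to the choice of normalization.

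For absolute convergence, I would use the Cartan decomposition $\GL_2(F)=K\cdot A\cdot K$. The integrand is bounded on the compact factors, so it suffices to bound the integral over the diagonal torus. For $m=\diag(a,b)$, temperedness of $\pi$ provides the Harish-Chandra estimate $|\langle \pi(m)\varphi_1,\varphi_2\rangle_\pi|\lesssim |a/b|^{-1/2}(1+|\log|a/b||)^N$, while Lemma~\ref{Mat-coe Weil}, combined with the Schwartz decay of $\phi_i$ extracted through the change of variable $x\mapsto a^{-1}x$ in the integral over $B$ (yielding an extra factor $|a|^{-4}$ for large $|a|$, with analogous decay for small $|a|$), gives rapid decay of the Weil matrix coefficient in $a$ and $b$. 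Together with the Cartan Jacobian, the combined integrand is integrable.

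A direct change of variable using the invariance of $\langle-,-\rangle_\pi$, of $\langle-,-\rangle_r$ under $\GL_2(F)^2\times\GO(B)(F)$, and the unimodularity of $\GL_2(F)$ shows that $Z_\pi$ is \emph{separately} $\GL_2(F)$-invariant under the actions on the pairs $(\varphi_1,\phi_1)$ and $(\varphi_2,\phi_2)$, and invariant under the diagonal $(B^\times\times B^\times)$-action on $r_\psi\otimes r_{\bar\psi}$. By the defining property of $\pi_B$, the Hom spaces
\[\Hom_{\GL_2(F)\times (B^\times)^2}(\pi\otimes r_\psi,\pi_B\boxtimes\pi_B^\vee),\qquad \Hom_{\GL_2(F)\times (B^\times)^2}(\pi^\vee\otimes r_{\bar\psi},\pi_B^\vee\boxtimes\pi_B)\]
are one-dimensional, spanned by generators $\theta_\pi$ and $\theta_{\pi^\vee}$. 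Composing $\theta_\pi\otimes\theta_{\pi^\vee}$ with the canonical nondegenerate pairing $\langle-,-\rangle_{\pi_B\boxtimes\pi_B^\vee}$ produces another invariant form with the same symmetry properties as $Z_\pi$; the uniqueness of the Shimizu lift forces the two to agree up to a scalar, which we absorb into the normalization of $\theta_{\pi^\vee}$.

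Finally, to exclude the trivial case $Z_\pi\equiv 0$, I would choose $\varphi_i,\phi_i$ highly concentrated near the identity so that the matrix coefficient integrals localise near $g=e$ and contribute the nonzero quantity $\langle\varphi_1,\varphi_2\rangle_\pi\cdot\langle\phi_1,\phi_2\rangle_r$ to leading order. The principal technical obstacle is the convergence argument in the merely tempered (not square-integrable) case, where the borderline $|a/b|^{-1/2}$ decay of the $\pi$-matrix coefficient must be compensated by carefully exploiting the Schwartz decay of the $\phi_i$ beyond the crude bound of Lemma~\ref{Mat-coe Weil}.
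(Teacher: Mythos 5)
Your convergence argument matches the paper's: Cartan decomposition, the Harish-Chandra tempered bound on the $\pi$-matrix coefficient, and decay from the Weil matrix coefficient. In fact your worry at the end is misplaced — the crude bound of Lemma~\ref{Mat-coe Weil} is already sufficient. For $m=\diag(a,b)$ in the dominant chamber, the Cartan Jacobian contributes $\delta(m)^{-1}$, temperedness gives $\delta(m)^{1/2}$ (times a polynomial in $\sigma(m)$), and Lemma~\ref{Mat-coe Weil} contributes $\delta(m)$ times a factor compactly supported in $ab$; the net $\delta(m)^{1/2}\cdot(\text{poly})\cdot(\text{compactly supported in }ab)$ is integrable without extracting any extra decay from the $B$-integral.

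The genuine gap is in the nonvanishing of $Z_\pi$. Your suggestion to "choose $\varphi_i,\phi_i$ highly concentrated near the identity" so that the $g$-integral localises near $g=e$ does not work as stated: the $\varphi_i$ are abstract vectors in $\pi$, not functions on $\GL_2(F)$, so "concentrated near the identity" has no meaning for them; and even taking $\phi_i$ to be bump functions near $(1,1)\in B\times F^\times$ does not make the Weil matrix coefficient $\langle r(g)\phi_1,\phi_2\rangle_r$ small for $g$ ranging over the maximal compact (a Weyl element acts by a Fourier transform, which spreads the support), so the contribution away from $g=e$ is not controlled. The paper's route is different and is what you need: factor $\phi_i(x,u)=\phi_i'(x)f_i'(u)$, absorb the $F^\times$-integrations into the $\varphi_i$ via $\int f_i(u)\pi(\diag\{u,1\})\varphi_i\,d^\times u$, then let the $f_i$ shrink to Dirac masses at $u=1$; this reduces the nonvanishing of $Z_\pi$ to the nonvanishing of the doubled integral $\int_{\SL_2(F)}\int_B r(g)\phi_1'(x)\phi_2'(x)\,dx\,\langle\pi(g)\varphi_1,\varphi_2\rangle_\pi\,dg$ for the $\SL_2\times\RO(B)$ theta correspondence, which is the result of \cite[App A]{Xue16}. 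Your heuristic should be replaced by this reduction-plus-citation.

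The final step — invoking the one-dimensionality of $\Hom_{\GL_2(F)\times(B^\times\times B^\times)}(\pi\boxtimes r_\psi,\pi_B\boxtimes\pi_B^\vee)$ together with the nondegeneracy of $\langle\cdot,\cdot\rangle_{\pi_B\boxtimes\pi_B^\vee}$ and the established nonvanishing of $Z_\pi$ to produce and normalize $(\theta_\pi,\theta_{\pi^\vee})$ — matches the paper and is fine.
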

\begin{proof}Let $K\subset \GL_2(F)$ be the standard maximal open compact subgroup. Let  $M^+\subset \GL_2(F)$ be the subset consisting of diagonal matrices of the form 
$$\begin{cases} 
\diag\{\varpi^a,\varpi^b\},\ b\leq a\in \BZ, & \mathrm{if}\ F\ \mathrm{non}-\mathrm{Archimedean};\\
\diag\{a,b\}, 0<a\leq b\in \BR, & \mathrm{if}\ F\ \mathrm{Archimedean}.
  \end{cases}  $$    
  By the Cartan decomposition $\GL_2(F)=KM^+K$, the integration $Z_\pi(\varphi_1,\phi_1,\varphi_2,\phi_2)$ is absolutely convergent if and only if the integral
  $$\int_{M^+}\mu(m)\langle\pi(m)\varphi_1,\varphi_2\rangle_\pi\langle r(m)\phi_1,\phi_2\rangle_r dm;\quad  \mu(m):= \frac{\Vol(KmK)}{\Vol(K)}$$
  converges absolutely. 
Let $\delta$ denote the modulus character $$\delta:\ M^+\to \BR;\quad \diag\{a,b\}\mapsto \frac{|a|}{|b|}$$
and $\sigma$ denote the height function
$$\sigma: M^+\to \BR;\quad \diag\{a,b\}\mapsto\max\{\log|a|,\log|a^{-1}|, \log|b|,\log|b^{-1}|\}.$$
Then (see \cite{Sil79})
\begin{itemize}
    \item there exists a constant $A$ such that  $$|\mu(m)|< A\delta^{-1}(m),$$
    \item ($\pi$ is tempered) there are constants $B,C$ such that 
$$|\langle \pi(m)\varphi_1,\varphi_2\rangle_\pi|\leq B\delta(m)^{1/2}(1+\sigma(m))^C.$$
\end{itemize}
Together with Lemma \ref{Mat-coe Weil}, one deduces the desired  absolute convergence. 

Note that for $\phi_i(x,u)=\phi^\prime_i(x)f_i^\prime(u)$, $Z_\pi(\varphi_1,\phi_1,\varphi_2,\phi_2)$ equals to
\begin{align*}
    &\int_{\SL_2(F)} \int_{B}r(g)\phi_1^\prime(x)\phi_2^\prime(x)dx \int_{F^\times}\int_{F^\times}f_1(u)f_2(a)
      \langle \pi(g) \pi(\diag\{u,1\})\varphi_1, \pi(\diag\{a,1\})\varphi_2\rangle_\pi d^\times a d^\times u dg\\
      =& \int_{\SL_2(F)} \int_{B}r(g)\phi_1^\prime(x)\phi_2^\prime(x)dx 
      \langle \pi(g) \int_{F^\times}f_1(u)\pi(\diag\{u,1\})\varphi_1 d^\times u, \int_{F^\times}f_2(a)\pi(\diag\{a,1\})\varphi_2d^\times a  \rangle_\pi dg
\end{align*}
where $\omega$ is the central character of $\pi$ and 
$$f_1(u)=f_1^\prime(u)\omega^{-1}|\cdot|(u),\quad f_2(a)=f_2^\prime(a)\omega|\cdot|(a).$$
Consider  $\phi_{i,n}(x,u)=\phi^\prime_i(x)f_{i,n}^\prime$ so that 
 $f_{i,n}\in \CS(F^\times)$ satisfies $$\Supp f_{n+1}\subset \Supp f_n,\quad \cap_n \Supp f_n=\{1\},\quad \int_{F^\times} f_n d^\times u=1.$$ 
Then $$\lim_{n}Z_\pi(\varphi_1,\phi_{1,n},\varphi_2,\phi_{2,n})=\int_{\SL_2(F)}\int_Br(g)\phi_1^\prime(x)\phi_2^\prime(x)dx\langle \pi(g)\varphi_1,\varphi_2\rangle_\pi dg$$
which can be made non-zero by \cite[App A]{Xue16}. Consequently, $Z_\pi$ is non-zero.

Finally, note that \[Z_\pi\in \Hom_{\GL_2(F)\times \GL_2(F)\times \Delta(B^\times\times B^\times)}(\pi\boxtimes r_\psi \otimes \pi^\vee\boxtimes r_{\bar{\psi}},\BC),\] and one has \[ \Hom_{\GL_2(F) \times (B^\times \times B^\times)}\left( \pi \boxtimes r_\psi, 
	\pi_B \boxtimes \pi_B^\vee \right) = \BC.\]
the existence of $(\theta_\pi, \theta_{\pi^\vee})$ follows.
\end{proof}

In the following, the triple $\left(\theta_\pi,\theta_{\pi^\vee}, \langle \cdot,\cdot \rangle_{\pi_B \boxtimes
\pi_B^\vee}\right)$ will be called {\em compatible to $Z_\pi$}.

\s{\bf (I).} For $i=1,2$, the  right $E^\times$ multiplication on $E$ 
 induces an isomorphism
$$\GO(V_i)\cong E^\times \rtimes \{1,\iota_i\}$$
where $\iota_i$ induces the  conjugation $z\mapsto \bar{z}$ on $E$.

Let $\chi = \chi_1 \boxtimes \chi_2$ be a  character of $E^\times \times E^\times$. 
Denote by $\pi_\chi = \pi_{\chi_1}
\boxtimes \pi_{\chi_2}$ where for $i=1,2$, $\pi_{\chi_i}$ is the unique irreducible smooth $\GL_2(F)$-representation such that
\[\Hom_{(\GL_2(F)  \times E^\times}
\left( \chi \boxtimes r_i, \pi_{\chi_i} \right) = \BC.\]


\begin{lem}\label{Mul-free}Consider the restriction of $\chi$ to $(E^\times \times E^\times)^0$. Then there exists a unique irreducible  $(\GL_2 \times \GL_2)^0(F)$-representation $\pi_\chi^B$ such that
\[\Hom_{(\GL_2 \times \GL_2)^0(F) \times (E^\times \times E^\times)^0}
\left( \chi \boxtimes r, \pi_\chi^B \right) = \BC.\]
Moreover as $(\GL_2 \times \GL_2)^0(F)$-representations, 
   \[\pi_\chi = \bigoplus_{B} \pi_\chi^B\]
   where $B$ runs over quaternion algebras over $F$ containing $E$.
\end{lem}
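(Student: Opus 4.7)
I would split the argument into (a) the existence and uniqueness of $\pi_\chi^B$ and (b) the direct sum decomposition.

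\emph{(a)} Via the factorization $\CS(B\times F^\times) = \CS(V_1\times F^\times)\otimes \CS(V_2\times F^\times)$, the Weil representation $r^\RI$ on $\CS(B\times F^\times)$ is (on the common $u$-consistency subgroup) the external tensor product of the Weil representations $r_i$ of the pairs $(\GL_2,\GO(V_i))$. The Howe duality for each individual pair $(\GL_2,\GO(V_i))$---equivalent to the defining uniqueness of $\pi_{\chi_i}$---then yields a unique irreducible $(\GL_2\times\GL_2)^0(F)$-representation $\pi_\chi^B$ satisfying the one-dimensional Hom property.

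\emph{(b)} The set of quaternion algebras $B\supset E$ is parametrized by $F^\times/N_{E/F}(E^\times)$ via $B=E\oplus Ej \mapsto N_B(j)\bmod N_{E/F}(E^\times)$, while $(\GL_2\times\GL_2)/(\GL_2\times\GL_2)^0\cong F^\times$ via $(g_1,g_2)\mapsto \det g_1(\det g_2)^{-1}$. Realize $\pi_\chi=\pi_{\chi_1}\boxtimes \pi_{\chi_2}$ as the $\chi$-coinvariants of $r_1\otimes r_2$ under $E^\times\times E^\times$. Taking coinvariants instead under the smaller subgroup $(E^\times\times E^\times)^0$ yields a larger $(\GL_2\times\GL_2)^0$-module, graded by the quotient $(E^\times\times E^\times)/(E^\times\times E^\times)^0\cong N_{E/F}(E^\times)$; by Lemma \ref{lem-Weil-rep} and the similitude matching, this grading corresponds exactly to the $F^\times/N_{E/F}(E^\times)$-grading on the $\GL_2\times\GL_2$-side, i.e.\ the $B$-indexing. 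Each graded piece is identified with $\pi_\chi^B$, and the pieces are pairwise non-isomorphic since they are distinguished by the residue class of $\det g_1(\det g_2)^{-1}$ modulo $N_{E/F}(E^\times)$. Combined with (a), this gives the direct sum.

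\emph{Main obstacle.} The principal technical issue is tracking the similitude bookkeeping across the three sides---the $\GL_2\times\GL_2$-side, the $\GO(V_1)\times\GO(V_2)$-side, and the quaternion algebra $B$---so that the grading on the coinvariant space matches the $B$-parametrization exactly. Once this matching is established, the direct sum decomposition follows from irreducibility and pairwise non-isomorphism of the summands.
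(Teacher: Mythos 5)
There are genuine gaps in both parts of your plan.

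In part \emph{(a)}, you invoke ``Howe duality for each individual pair $(\GL_2,\GO(V_i))$'' to conclude uniqueness of $\pi_\chi^B$ for the similitude pair $\bigl((\GL_2\times\GL_2)^0,\,(E^\times\times E^\times)^0\bigr)$. This does not follow formally. The representation $r^{\RI}$ is not the external tensor product $r_1\boxtimes r_2$ of $\GL_2\times\GO(V_1)\times\GL_2\times\GO(V_2)$; it is only a representation of the similitude-matched subgroup $(\GL_2\times\GL_2)^0\times(\GO(E)\times\GO(Ej))^0$, and multiplicity one for the restricted pair is a new statement. The paper obtains it from the Roberts mechanism: one writes $r\cong\mathrm{ind}_R^{G\times H}\omega$ with $R$ the similitude-matched locus inside $G\times H$ (where $\omega=\omega_1\boxtimes\omega_2$ is the isometry Weil representation), first establishes strong Howe duality for $(r^+,G^+,(E^\times\times E^\times)^0)$ with $G^+=(\GL_2^+\times\GL_2^+)^0$ and $\GL_2^+=\{g:\det g\in N(E^\times)\}$, and then passes to $G$ via \cite[Lem 6.1]{Rob}. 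None of this is implied by Howe duality for each $(\GL_2,\GO(V_i))$ separately.

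In part \emph{(b)}, the grading argument has a concrete error and a structural gap. You claim the two summands are ``distinguished by the residue class of $\det g_1(\det g_2)^{-1}$ modulo $N_{E/F}(E^\times)$''; but on $(\GL_2\times\GL_2)^0$ one has $\det g_1=\det g_2$ identically, so this quantity is always $1$ and distinguishes nothing. More substantively, the decomposition $\pi_\chi=\bigoplus_B\pi_\chi^B$ is not the eigenspace decomposition for a character of $(E^\times\times E^\times)/(E^\times\times E^\times)^0\cong N(E^\times)$ acting on coinvariants of the Weil representation. The actual mechanism in the paper is the branching law from Gan \cite[Prop 5.3]{Gan}: $\pi_{\xi}|_{\GL_2^+}=\pi^+_{\xi,V_i}\oplus\pi^-_{\xi,V_i}$ with the crucial dependence $\pi^\pm_{\xi,V_2}=\pi^{\eta(N(j))\pm}_{\xi,V_1}$ on the invariant $N(j)$ of the chosen $B=E\oplus Ej$. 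It is precisely this sign twist that links the $\pm$-labeling on the $V_2$-side to the isomorphism class of $B$, and it is why $\pi_\chi^B=\pi_{\chi_1}^+\otimes\pi_{\chi_2}^+\oplus\pi_{\chi_1}^-\otimes\pi_{\chi_2}^-$ and $\pi_\chi^{B'}$ group the four $G^+$-irreducible constituents of $\pi_\chi|_{G^+}$ into two $G$-irreducible blocks, each $G$-induced from $G^+$. Without this input, your plan does not produce the $B$-indexed decomposition or establish irreducibility of the summands.

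Concretely, to repair the argument you need: (i) Roberts' induction picture and strong Howe duality for the restricted similitude pair, and (ii) Gan's $\pm$-branching together with the relation $\pi^\pm_{\xi,V_2}=\pi^{\eta(N(j))\pm}_{\xi,V_1}$ that ties the sign labels to the quaternion algebra $B$. Both ingredients are absent from the proposal.
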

\begin{proof}To simplify notations, let  $$G=(\GL(2)\times\GL(2))^0,\quad H=(\GO(E)^\times\times \GO(Ej)^\times)^0.$$
$$\GL^+_2:=\{g\in G| \det(g)\in N(E^\times)\},\quad G^+=(\GL^+_2\times\GL^+_2)^0$$

Denote by $\omega_i$  the Weil representation of $\SL_2\times \RO(V_i)$ on $\CS(V_i)$.  
By \cite[Section 3-4]{Rob}, the  $(\SL_2\times \SL_2)\times (\RO(V_1)\times \RO(V_2))$-action on $\omega:=\omega_1\boxtimes\omega_2$ extends to the group $$R:=\{(g,h)\in G\times H| \nu(g)=\nu(h)\}$$
and $r\cong \mathrm{ind}_{R}^{G\times H}\omega$. Moreover the strong Howe duality for the triple holds
$$(r^+:=\mathrm{ind}_{R}^{G^+\times H}\omega,\ G^+,\ (E^\times\times E^\times)^0). $$

By \cite[Prop 5.3]{Gan}, for any character $\xi$ of $E^\times$,  $\pi_{\xi}|_{\GL_2^+}=\pi_{\xi,V_i}^+\oplus \pi_{\xi, V_i}^-$, where $\pi_{\xi,V_i}^\pm$ are irreducible  representations such that 
\begin{itemize}
\item for the plus part $r_i^+$ of $r_i$ defined similarly as $r^+$, $$\Hom_{\GL^+_2\times E^\times}(\xi\times r_i^+, \pi_{\xi,V_i}^+)=\BC$$
    \item for the quadratic character $\eta$ associated to $E$, $\pi_{\xi,V_2}^{\pm}=\pi_{\xi,V_1}^{\eta(N(j))\pm}$.
\end{itemize} Then by \cite[Lemma 4.2]{Rob},  $\pi_{\chi_1}^{\pm}\otimes \pi_{\chi_2}^{\pm}$ are all irreducible  $G^+$-representations. Moreover, one can check 
$$\Hom_{G^+\times (E^\times E^\times)^0}(\chi\boxtimes r^+, \pi_{\chi_1}^{+}\otimes \pi_{\chi_2}^{+})=\BC.$$
By \cite[Lem 6.1]{Rob}, the strong Howe duality for the triple
$(r, G, (E^\times\times E^\times)^0)$ holds. Set $$\pi_\chi^B:=\mathrm{ind}_{G^+}^G \pi_{\chi_1}^+\otimes\pi_{\chi_2}^+=\pi_{\chi_1}^+\otimes\pi_{\chi_2}^+\oplus \pi_{\chi_1}^-\otimes\pi_{\chi_2}^-.$$
Then $\pi_\chi^B$ is irreducible, $\pi_\chi=\bigoplus_B \pi_\chi^B$ and  \[\Hom_{G \times (E^\times \times E^\times)^0}
\left( \chi \boxtimes r, \pi_\chi^B \right) = \BC.\]
\end{proof}
\begin{lem}\label{Int-I}For any $\phi_1,\phi_2\in \CS(F)$, consider the function $$I(a):\ F\to\BC;\quad a\mapsto \int_F \phi_1(ax)\phi_2(x)dx.$$
Then 
\begin{itemize}
    \item for $F$ non-Archimedean, there exists $A(a),B(a)\in \CS(F)$ such that $$I(a)=A(a)+|a|^{-1}B(a^{-1}).$$
    \item for $F$ Archimedean, there exists $A(a),B(a)\in \CS(\BR)$ such that restricted on $\BR$, $$I(a)=A(a)+|a|^{-1}B(a^{-1}).$$
\end{itemize}
\end{lem}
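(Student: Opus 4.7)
The plan is to introduce a cutoff function $\rho$ on $F$ that equals $1$ on a neighborhood of $0$ and has compact support (a smooth bump in the Archimedean case; the characteristic function of a small compact open neighborhood in the non-Archimedean case), and to use it to separate the small-$a$ and large-$a$ behaviors of $I(a)$.

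First, I would set $A(a):=\rho(a)I(a)$. Because $\phi_1,\phi_2\in\CS(F)$, the function $a\mapsto I(a)$ is smooth (Archimedean, by differentiation under the integral) or locally constant (non-Archimedean, using the compact support of $\phi_2$ and the uniform local constancy of $\phi_1$) on all of $F$. Multiplying by the compactly supported $\rho$ gives $A\in\CS(F)$. For the complementary piece, for $a\neq 0$ the substitution $y=ax$ rewrites
\[ I(a)=|a|^{-1}\int_F\phi_1(y)\phi_2(y/a)\,dy=|a|^{-1}J(a^{-1}),\qquad J(b):=\int_F\phi_1(y)\phi_2(by)\,dy, \]
so the natural candidate is $B(b):=(1-\rho(b^{-1}))J(b)$ for $b\neq 0$, extended by $B(0):=J(0)$. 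With these choices, $(1-\rho(a))I(a)=|a|^{-1}B(a^{-1})$ for $a\neq 0$, and summing the two pieces yields $I(a)=A(a)+|a|^{-1}B(a^{-1})$ as required.

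The step that needs care is verifying $B\in\CS(F)$. Compact support of $B$ is immediate: since $\rho\equiv 1$ near $0$, the factor $(1-\rho(b^{-1}))$ vanishes for $|b|$ large. Smoothness (or local constancy) of $B$ away from $b=0$ is clear from the same reasoning applied to $J$. The delicate point is smoothness of $B$ at $b=0$, and it is handled precisely by the compact support of $\rho$: then $\rho(b^{-1})=0$ for $|b|$ sufficiently small, so $B$ coincides with $J$ on a neighborhood of $0$ and inherits the smoothness/local constancy of $J$ there.

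In the Archimedean case one takes $\rho$ as a real-variable bump function and restricts $a$ to $\BR\subset F$; everything else goes through identically, which accounts for the ``restricted on $\BR$'' clause in the statement. The main thing to keep track of is the interaction of the two scales $|a|$ and $|a^{-1}|$, and the cutoff $\rho$ is exactly the device that organizes it; beyond that, no serious obstacle is expected.
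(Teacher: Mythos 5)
Your proof is correct, and it takes a genuinely different route from the paper's. You observe the symmetry $I(a)=|a|^{-1}J(a^{-1})$ with $J(b)=\int_F\phi_1(y)\phi_2(by)\,dy$ of the same shape as $I$, note that both $I$ and $J$ are smooth (resp.\ locally constant) and bounded on all of $F$, and then glue the two regimes with a compactly supported cutoff $\rho\equiv 1$ near $0$: the piece $A=\rho I$ is smooth with compact support hence Schwartz, and $B(b)=(1-\rho(b^{-1}))J(b)$ (with $B(0)=J(0)$) coincides with $J$ near $0$ and vanishes for $|b|$ large, hence is also Schwartz. The paper instead invokes the Aizenbud--Gourevitch decomposition $\CS(F)=\CS(F^\times)+\widehat{\CS(F^\times)}$, uses Parseval to move a Fourier transform from $\phi_2$ onto $\phi_1$, and reduces to showing $I\in\CS(F)$ when $\phi_2\in\CS(F^\times)$ by integration by parts; the term $|a|^{-1}B(a^{-1})$ then arises from the Fourier change of variables. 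Your cutoff argument is more elementary and self-contained (no Fourier analysis, no external decomposition theorem), at the cost of treating the two regimes by hand via the explicit symmetry $I\leftrightarrow J$; the paper's approach is shorter once one is willing to cite \cite[Thm 2.2.13]{AG13}. Both are valid, and both handle the Archimedean restriction to $\BR$ the same way (restriction commutes with everything; only care must be taken that $|a|$ in the formula is the normalized module of $F$, so $|a|_\BC=a^2$ for real positive $a$ in your change of variables when $F=\BC$).
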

\begin{proof}The case $F=\BC$ follows directly from the case $F=\BR$. By the fact (see  \cite[Thm 2.2.13]{AG13}) $$\CS(F)=\CS(F^\times)+\widehat{\CS(F^\times)}$$ and the relation  $$\int_F\phi_1(ax)\hat{\phi}_2(x)dx=\int_F\hat{\phi}_1(x)\phi_2(ax)dx,$$
it suffices to show $I(a)\in \CS(F)$ if $\phi_2\in \CS(F^\times)$ and $F\neq \BC$. One can see this using integration by parts.
\end{proof}
\begin{prop}\label{explicit-local-theta-B}When $\chi$ is unitary, the bilinear form 
	$$Z_\chi:\ r_{\psi}\times r_{\bar{\psi}}\to\BC,\quad (\phi_1,\phi_2)\mapsto\int_{(E^\times \times E^\times)^0} \chi(t)
	\langle r(t)\phi_1,\phi_2 \rangle_r d^\times t $$
	 is well-defined and nonzero. Moreover,  there exists 
 a pair of
nonzero invariant linear functionals
\[(\theta_\chi,\theta_{\chi^{-1}}) \in \Hom_{(\GL_2 \times \GL_2)^0(F) \times (E^\times \times E^\times)^0}
\left(\chi \boxtimes r_\psi, \pi_\chi^B \right)\times \Hom_{(\GL_2 \times \GL_2)^0(F) \times (E^\times \times E^\times)^0}
\left(\chi^{-1} \boxtimes r_{\bar{\psi}}, \pi_{\chi^{-1}}^B \right),
\]
such that  
\[\langle \theta_\chi(\phi_1), \theta_{\chi^{-1}}(\phi_2) \rangle_{\pi_\chi^B} = 
Z_\chi\left( \phi_1,\phi_2 \right).\]
\end{prop}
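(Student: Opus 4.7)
The plan is to mimic the argument for Proposition \ref{explicit-local-theta-A}; the new difficulty is absolute convergence on the possibly non-compact torus $T := (E^\times\times E^\times)^0$. First I would verify convergence. When $E$ is a field, $T$ is compact modulo the diagonal $F^\times$-center, and since $\chi$ is unitary while the center acts on $\langle r(t)\phi_1,\phi_2\rangle_r$ through a unitary character (the $|u|^2$ factor in $\langle\cdot,\cdot\rangle_r$ absorbs the similitude weight), the integrand is bounded on the compact quotient and the integral converges. The serious case is $E \simeq F\times F$, where $T$ is a non-compact split torus. Decomposing $\phi_i(x,u) = \phi_{i,1}'(x_1)\phi_{i,2}'(x_2)f_i(u)$ relative to $B = V_1\oplus V_2$ and the $F$-coordinates of $E$, one writes the matrix coefficient $\langle r(t)\phi_1,\phi_2\rangle_r$ as a product of one-dimensional integrals over the $F$-coordinates of $V_1\oplus V_2$ together with one integral over $F^\times$. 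Applying Lemma \ref{Int-I} to each of the former produces a germ expansion with Schwartz-type decay as the coordinates of $t$ tend to $0$ or $\infty$; together with $|\chi(t)|=1$ and a direct bound on the $F^\times$-factor analogous to Lemma \ref{Mat-coe Weil}, this yields absolute convergence.

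Next I would establish non-vanishing by a regularization trick as in the proof of Proposition \ref{explicit-local-theta-A}. Choose $\phi_{i,n}(x,u) = \phi_i'(x)f_{i,n}(u)$ with $f_{i,n}\in \CS(F^\times)$ supported in shrinking neighborhoods of $1 \in F^\times$ and total integral one. Then $\lim_n Z_\chi(\phi_{1,n},\phi_{2,n})$ reduces to an integral of $\chi(t)$ against the $T$-matrix coefficient on $\CS(B)$ with respect to the natural pairing. Choosing $\phi_1',\phi_2' \in \CS(B)$ as bump functions supported in a small neighborhood of a fixed point in the open $T$-orbit on $B$ makes this limiting integral manifestly nonzero, so $Z_\chi\not\equiv 0$.

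Finally, $Z_\chi$ defines a nonzero element of
\[
\Hom_{(\GL_2\times\GL_2)^0(F)\times T}\bigl(\chi\boxtimes r_\psi \otimes \chi^{-1}\boxtimes r_{\bar\psi},\BC\bigr).
\]
By Lemma \ref{Mul-free} applied to both tensor factors, this Hom space is one-dimensional after projecting to $\pi_\chi^B \otimes \pi_{\chi^{-1}}^B$; pairing against the non-degenerate form $\langle\cdot,\cdot\rangle_{\pi_\chi^B}$ therefore yields a factorization $Z_\chi(\phi_1,\phi_2) = \langle\theta_\chi(\phi_1),\theta_{\chi^{-1}}(\phi_2)\rangle_{\pi_\chi^B}$ with $\theta_\chi$ and $\theta_{\chi^{-1}}$ uniquely determined and nonzero. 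The main obstacle is the absolute convergence on the non-compact split torus; here the germ expansion of Lemma \ref{Int-I} is essential to extract decay strong enough to integrate against the unit-modulus character $\chi$.
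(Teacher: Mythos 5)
Your proposal takes essentially the same route as the paper's proof: reduce to $\phi_i(x,u)=\phi_i'(x)f_i(u)$, use Lemma~\ref{Int-I} to handle the non-compact torus when $E=F\oplus F$, prove non-vanishing by shrinking $f_i$ to a delta at $1$, and conclude the factorization from the multiplicity-one statement of Lemma~\ref{Mul-free}. The paper organizes the convergence argument slightly differently — it integrates first over $E^1\times E^1$ (using Lemma~\ref{Int-I} there) and then over the norm image $N(E^\times)$, where the $f_i\in\CS(F^\times)$ factor controls decay — but this is the same mechanism you describe in coordinates. One imprecision worth correcting: there is no open $T$-orbit on $B$, since $\dim T=3<4=\dim B$; the generic orbit has codimension one. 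The bump-function argument still works, but the correct justification is that at a point with both $V_1$- and $V_2$-components nonzero the $T$-stabilizer is trivial and the orbit is closed, so the orbit map is proper and only a compact set of $t\in E^1\times E^1$ contributes to $\int_B\phi_1'(t^{-1}x)\phi_2'(x)\,dx$, whence no cancellation occurs for sufficiently concentrated $\phi_i'$. The paper instead cites \cite[App A]{Xue16} for this non-vanishing.
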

\begin{proof}Without lose of generality, we may assume $\phi_i(x,u)=\phi_i^\prime(x)f_i(u)$ for $\phi_i^\prime\in \CS(B)$ and $f_i\in\CS(F^\times)$.
Then for $t\in (E^\times\times E^\times)^0$,
$$\langle r(t)\phi_1,\phi_2\rangle_r=\int_{F^\times} f_1(N(t)u)f_2(u)|u|^2\int_B \phi_1^\prime(t^{-1}x)\phi_2^\prime(x)dx d^\times u.$$
Note that for any $t^\prime\in (E^\times\times E^\times)^0$, the integration 
$$\int_{E^1\times E^1}\chi(tt^\prime)F(tt^\prime)dt d^\times t^\prime,\quad F(t):=\int_B \phi_1(t^{-1}x)\phi_2(x)dx$$
converges absolutely. This is clear when  $E/F$ is a field extension and follows  from Lemma \ref{Int-I} when $E=F\oplus F$. Thus the integration descends to  a continuous function $G(a)$, $a=N(t)$. 
Clearly the integration $$\int_{N(E^\times)}\int_{F^\times}f_1(au)f_2(u)|u^2|G(a) d^\times u d^\times a$$ 
converges absolutely and gives the desired  $Z_\chi(\phi_1,\phi_2)$.
Consequently, $Z_\chi$ is well-defined.	

We can choose suitable $f_i$ so that $$\int_{N(E^\times)}\int_{F^\times}f_1(au)f_2(u)|u^2|G(a) d^\times u d^\times a= G(1)$$
By \cite[App A]{Xue16}, one can choose $\phi_i^\prime$ so that 
$$G(1)=\int_{E^1\times E^1}\chi(t)\int_B \phi_1^\prime(t^{-1}x)\phi_2^\prime(x)dx d^\times t\neq0.$$
Consequently, $Z_\chi\neq0$. Since  \[ \Hom_{(\GL_2\times\GL_2)^0(F) \times (E^\times \times E^\times)}\left( \chi \boxtimes r, 
	\pi_\chi^B \right) = \BC,\]
the existence of $(\theta_\chi, \theta_{\chi^{-1}})$ follows immediately.
\end{proof}

In the following, the triple $\left(\theta_\chi,\theta_{\chi^\vee}, \langle \cdot,\cdot \rangle_{\pi_\chi^B}\right)$ 
will be called {\em compatible to $Z_\chi$}.

\subsection{Global theory}\label{global theory} In this subsection, let $F$ be a number field with ring of adeles $\BA$. Fix a nontrivial additive character $\psi:\ F\bs \BA\to\BC^\times$. Let $E/F$ be an \'etale quadratic extension and $B\supset E$ be a quaternion algebra over $F$. Define  the Weil representation $r^\RW$ and $r^\RI$ on
$\CS(B_\BA \times \BA^\times)$  as the restrict tensor product of the local Weil
representations. As in the local situation, we shall simply denote $r^\RI$ and $r^\RW$ by $r$ if there is no confusion.

Let $(G,H)$ be one of the two dual pairs considered above. Then for any $\phi \in \CS(B_\BA \times \BA^\times)$,  the theta function
\[\theta(g,h,\phi) = \sum_{(x,u) \in B \times F^\times} r(g,h)\phi(x,u), \quad g \in G(\BA), 
h \in H(\BA)\]
is automorphic on $G(\BA)\times H(\BA)$.

Consider the case $(G,H) = (\GL_2, \GO(B))$. 
Let $\pi$ be a cuspidal automorphic $\GL_2(\BA)$-representation. Let $\pi_B$
be the unique (if exists) irreducible automorphic representation on $B_\BA^\times$ whose Jacquet-Langlands correspondence is $\pi$. 

\begin{prop}[Global Shimizu lifting, \cite{YZZ}, Section 2.1.1]
For each $\varphi \in \pi$, $\phi \in \CS(B_\BA \times \BA^\times)$ and $h \in B_\BA^\times \times B_\BA^\times$, the integral
\[\theta_\pi(\varphi,\phi)(h) := \int_{[\GL_2]} \varphi(g) \theta(g,h,\phi) dg
\]
converges absolutely. Moreover,  $\theta_\pi(\varphi,\phi)$
are automorphic forms on $B_\BA^\times \times B_\BA^\times$ and span  $\pi_B \boxtimes \pi_B^\vee$. 
\end{prop}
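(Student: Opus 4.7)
My plan is to verify the three assertions in the statement---absolute convergence, automorphy in $h$, and that $\theta_\pi(\varphi,\phi)$ span $\pi_B \boxtimes \pi_B^\vee$---in succession.

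First, for absolute convergence, I would use that $\varphi$ is cuspidal and hence rapidly decreasing on a Siegel set for $[\GL_2]$, while a standard reduction-theoretic estimate (via Poisson summation applied to the Schwartz function $r(g,h)\phi$) shows that $g \mapsto \theta(g,h,\phi)$ has at most moderate growth on Siegel sets, uniformly for $h$ in a compact set. Rapid decay times moderate growth is integrable on $[\GL_2]$. For automorphy in $h$, the action of $(h_1,h_2) \in B^\times \times B^\times \subset \GO(B)$ on $B$ by $x \mapsto h_1 x h_2^{-1}$ preserves the $F$-rational lattice $B \subset B_\BA$, so the theta sum---and therefore $\theta_\pi(\varphi,\phi)$---is invariant under $(B^\times \times B^\times)(F)$.

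Second, to identify where the image lies, I would observe that the assignment $(\varphi,\phi) \mapsto \theta_\pi(\varphi,\phi)$ intertwines the $\GL_2(\BA) \times (B_\BA^\times)^2$-action on $\pi \otimes r^\RW_\psi$ with right translation on automorphic forms on $(B_\BA^\times)^2$. By local Howe duality (equivalently, the realization of the local Jacquet--Langlands transfer by theta), $\pi_{B,v} \boxtimes \pi_{B,v}^\vee$ is the unique irreducible smooth quotient of $\pi_v \otimes r^\RW_{\psi_v}$; consequently the image of $\theta_\pi$ lies inside the $\pi_B \boxtimes \pi_B^\vee$-isotypic subspace of automorphic forms on $(B_\BA^\times)^2$.

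The main obstacle, and the only substantive step, is nonvanishing. This is the classical theorem of Shimizu. A modern route proceeds through the seesaw pair $(\GO(B), \GL_2)$ and $(\Sp_2 \times \Sp_2, \Sp_2)$ together with the Rallis inner-product formula, which expresses the Petersson pairing $\langle \theta_\pi(\varphi_1,\phi_1), \theta_\pi(\varphi_2,\phi_2)^\vee \rangle$ as $L(1,\pi,\Ad)$ times a product of explicitly computable local zeta integrals; since $L(1,\pi,\Ad) \neq 0$ for cuspidal $\pi$ and the local factors are nonzero for suitable $\varphi$ and $\phi$ (by the nonvanishing of the local $Z_\pi$ established in Proposition \ref{explicit-local-theta-A}), the map $\theta_\pi$ is nonzero. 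Together with the irreducibility of $\pi_B \boxtimes \pi_B^\vee$ as a $(B_\BA^\times)^2$-representation, this forces the span of the $\theta_\pi(\varphi,\phi)$ to be the whole automorphic realization of $\pi_B \boxtimes \pi_B^\vee$.
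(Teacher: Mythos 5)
The paper states this proposition as a citation to \cite{YZZ}, Section~2.1.1, and does not supply its own proof, so there is no internal argument to compare against. Your sketch is a coherent and essentially correct account of how the result is established, and the verification of equivariance and the reduction of the ``span'' assertion to nonvanishing plus local Howe duality and global multiplicity one are the right steps. A few remarks on where your route diverges from the classical one that \cite{YZZ} ultimately relies on (Shimizu, and Waldspurger \cite{wal}): for nonvanishing, the classical proof does not invoke the Rallis inner-product formula; rather, it computes a Whittaker/Fourier coefficient of $\theta_\pi(\varphi,\phi)$ explicitly and identifies it (up to an explicit nonzero local factor) with a Whittaker coefficient of $\varphi$, which is nonzero since $\pi$ is cuspidal. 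Your Rallis inner-product route does work in principle, but note that the doubled pair here is $(\Sp_4, \GO(B))$ with $\dim B = 4$, which sits at or below the boundary of Weyl's convergent range, so one must appeal to the (first-term or regularized) Siegel--Weil formula rather than the naive one; the classical Whittaker computation avoids this entirely and is the more economical path. One further small omission: your automorphy discussion checks only $B^\times(F)\times B^\times(F)$-invariance of the theta kernel, whereas automorphy in the full sense also requires smoothness, $K$-finiteness, and moderate growth of $h \mapsto \theta_\pi(\varphi,\phi)(h)$; these follow from the corresponding standard properties of the theta kernel and the absolute convergence you already established, but should be at least mentioned.
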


Consider the case $(G,H) = ( (\GL_2 \times \GL_2)^0, (\GO(E) \times \GO(Ej))^0)$. 
Let $\chi = \chi_1 \boxtimes \chi_2$ be a Hecke character on $E_\BA^\times \times E_\BA^\times$. 
Denote by $\pi_\chi = \otimes_v \pi_{\chi_v}$ the theta lifting of $\chi$ to $\GL_2(\BA)^2$. It is
automorphic and even cuspidal if $\chi \not= \chi^\tau$. Now wiew $\chi$ as a character of $(E_\BA^\times \times E_\BA^\times)^0$.
\begin{prop} For  each $\phi \in \CS(B_\BA \times \BA^\times)$ and $g \in (\GL_2 \times \GL_2)^0(\BA)$,  the integral
\[\theta_\chi(\phi)(g) :=  \int_{[(E^\times \times E^\times)^0]} \chi(t) \theta(g,t,\phi) d^\times t\]
 converges absolutely. Moreover,  
 these functions  $\theta_\chi(\phi)$ are automorphic forms on $(\GL_2 \times \GL_2)^0(\BA)$ and  span $\pi_\chi^B = \otimes_v \pi_{\chi_v}^{B_v}
\subset \pi_\chi$. 
\end{prop}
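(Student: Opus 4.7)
The strategy is to reduce the three assertions --- absolute convergence, automorphy, and identification of the span --- to their classical counterparts for the individual theta lifts $\theta_{\chi_i}(\phi_i):\GL_2(\BA)\to\BC$ ($i=1,2$), by exploiting the factorization of the Weil representation $r=r^\RI$ afforded by Lemma \ref{lem-Weil-rep}. By linearity it suffices to treat factorizable $\phi=\phi_1\otimes\phi_2$ with $\phi_i\in\CS(V_i\times\BA^\times)$, in which case the adelic version of Lemma \ref{lem-Weil-rep} gives
\[\theta(g,(t_1,t_2),\phi)=\theta_1(g_1,t_1,\phi_1)\,\theta_2(g_2,t_2,\phi_2)\]
for $g=(g_1,g_2)\in(\GL_2\times\GL_2)^0(\BA)$ and $(t_1,t_2)\in(E^\times\times E^\times)^0(\BA)$, where $\theta_i$ denotes the theta kernel on $\GL_2\times\GO(V_i)$.

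Next, I would parametrize $(E^\times\times E^\times)^0$ via the $F$-rational isomorphism of tori
\[\Psi:E^\times\times E^1\xrightarrow{\sim}(E^\times\times E^\times)^0,\quad (t,s)\mapsto(t,ts),\]
where $E^1$ is the norm-one subtorus. This identifies $[(E^\times\times E^\times)^0]\cong[E^\times]\times[E^1]$ as measure spaces (with compatible Haar measures), under which
\[\theta_\chi(\phi)(g)=\int_{[E^\times]}(\chi_1\chi_2)(t)\,\theta_1(g_1,t,\phi_1)\left(\int_{[E^1]}\chi_2(s)\,\theta_2(g_2,ts,\phi_2)\,ds\right)dt.\]
For absolute convergence I would use that the Weil action of $s\in E^1(\BA)$ on $\CS(V_2\times\BA^\times)$ moves $\phi_2$ continuously within the Schwartz class, producing a uniform-in-$s$ pointwise majorant $|\theta_2(g_2,ts,\phi_2)|\leq\theta_2(g_2,t,\Psi_2)$ for some fixed non-negative $\Psi_2\in\CS(V_2\times\BA^\times)$. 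The outer integral is then dominated by the classical absolutely convergent theta integral $\int_{[E^\times]}|\theta_1(g_1,t,\phi_1)\theta_2(g_2,t,\Psi_2)|\,dt<\infty$. The automorphy of $\theta_\chi(\phi)$ under $(\GL_2\times\GL_2)^0(F)$ is then immediate from the left $F$-rational invariance of the theta kernel, exactly as in the Shimizu proposition above.

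Finally, for the identification of the span: the product $\widetilde F:=\theta_{\chi_1}(\phi_1)\boxtimes\theta_{\chi_2}(\phi_2)$ is a cuspidal automorphic form in $\pi_{\chi_1}\boxtimes\pi_{\chi_2}=\pi_\chi$ on $\GL_2(\BA)^2$, whose restriction to $(\GL_2\times\GL_2)^0(\BA)$ decomposes place-by-place via Lemma \ref{Mul-free}. Since $\theta_\chi(\phi)$ is constructed from $r^\RI$ with the specific splitting $B=V_1\oplus V_2$ built in, its image lies in the summand $\pi_\chi^B=\otimes_v\pi_{\chi_v}^{B_v}$; surjectivity onto $\pi_\chi^B$ then follows by varying $\phi$ and invoking the local surjectivity established in Proposition \ref{explicit-local-theta-B} together with a standard restricted-tensor-product argument. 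The main obstacle I anticipate is the absolute convergence in the split case $E=F\oplus F$, where $[E^1]\cong[\Gm]$ is no longer compact; here one must additionally exploit the non-triviality of $\chi_2|_{[E^1]}$ (a consequence of $\chi_2\ne\chi_2^\tau$) to obtain the cancellation needed to recover absolutely convergent classical theta integrals.
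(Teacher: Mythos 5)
Your factorization $\theta=\theta_1\theta_2$ and the parametrization $(E^\times\times E^\times)^0\cong E^\times\times E^1$ are a perfectly reasonable alternative to the paper's, which instead integrates first over the compact $[E^1\times E^1]$ and then over the norm image $N(E^\times)\backslash N(E_\BA^\times)$; both isolate the same $\BR_{>0}$ direction responsible for non-compactness. However, the key convergence step is flawed as written. The claimed bound $|\theta_2(g_2,ts,\phi_2)|\le\theta_2(g_2,t,\Psi_2)$ is incoherent for general $g_2$: even for $\Psi_2\ge 0$ the theta series $\theta_2(g_2,t,\Psi_2)$ need not be non-negative, and the Weil action of $g_2$ (which involves Fourier transforms) does not preserve pointwise domination, so a majorant $\Psi_2\ge|r_2(1,s)\phi_2|$ does not yield a majorant after applying $r_2(g_2,t)$. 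The fix is to absorb $g_2$ first: since $\GL_2$ and $\GO(V_2)$ commute, set $\tilde\phi_2=r_2(g_2,1)\phi_2$ and bound $|\theta_2(1,ts,\tilde\phi_2)|\le\theta_2(1,t,\tilde\Psi_2)$, where one uses that the $\GO(V_2)$-action on $\CS(V_2\times F^\times)$ is merely translation/scaling and hence preserves pointwise order. More importantly, the asserted convergence of $\int_{[E^\times]}|\theta_1(g_1,t,\phi_1)\theta_2(g_2,t,\Psi_2)|\,dt$ is not "classical": the classical Waldspurger theta lift integrates over the compact $[E^1]$, and the convergence over the non-compact quotient is precisely what the proposition asserts. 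It follows from a specific feature of the $\CS(V\times F^\times)$ setup which the paper's proof makes explicit: because $\phi$ is Schwartz in the $u\in\BA^\times$ variable and $u$ runs over $F^\times$ (where $|u|_\BA=1$ by the product formula), the theta kernel is supported on a set with $|\nu(t)|$ in a fixed compact interval, so the integral over the $\BR_{>0}$ direction is actually over a compact set. Your proposal takes this for granted.

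The span identification is also too thin. Observing that $\theta_\chi(\phi)$ "is constructed from $r^{\RI}$ with the specific splitting built in" is the conclusion, not an argument; the paper shows the span equals $\pi_\chi^B$ by restricting $\theta_\chi(\phi)$ to $\SL_2\times\SL_2$, expressing $\theta_\chi(\phi)(\cdot\,d(a))$ as a weighted sum of translates $\theta'_\chi(\phi')(g^u)$ of the theta lift of $\chi|_{E^1\times E^1}$, and then invoking the finiteness of the $\GL_2(F)$-twist orbit of $\pi_\chi^{B,+}$ together with Lemma \ref{Mul-free}. Your sketch skips all of this. Finally, your worry about the split case $E=F\oplus F$ is moot for the global statement: in that case $\pi_{\chi_i}$ is never cuspidal, and the assumption $\chi_i\ne\chi_i^\tau$ (used throughout the paper) forces $E$ to be a field.
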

\begin{proof}Assume $\phi(x,u)=\phi^\prime(x)f(u)$. Then for any $g\in \SL_2(\BA)$, $a\in \BA^\times$ and $t\in (E_{\BA}^\times\times E_{\BA}^\times)^0$, one has 
$$ (*)\ \theta(gd(a),t,\phi)=\sum_{u\in F^\times}(\sum_{x\in B} r(g^u)\phi^\prime_t(x))f(a^{-1}N(t)u)|a|^{-1}$$
where $g^u=d(u)^{-1}g d(u)$ and $ \phi^\prime_t(x)=\phi^\prime(t^{-1}x).$
Thus
$$\int_{[(E^\times\times E^\times)^0]}\chi(t)\theta(gd(a),t,\phi)d^\times t=\int_{[(E^\times\times E^\times)^0]}\chi(t)\sum_{u\in F^\times}\left(\sum_{x\in B}r(g^u)\phi_t(x)\right) f(a^{-1}N(t)u)|a|^{-1}d^\times t.$$
To show the integration converges absolutely, up to replacing $\phi$ by $r(gd(a))\phi$, one can assume $gd(a)=1$. In this case, the integration $$F(t):=\int_{[E^1\times E^1]}\chi(tt^\prime)\sum_{x\in B}\phi^\prime_{tt^\prime}(x)d^\times t^\prime,\quad \forall t\in (E_\BA^\times \times  E_\BA^\times)^0$$
converges absolutely. Moreover $F(t)$ descends to  a continuous function $G(N(t))$ on  $N(E^\times)\bs N(E_{\BA}^\times)$.
Since  the integration 
$$\int_{N(E^\times)\bs N(E_{\BA}^\times)} G(a)\sum_{u\in F^\times}f(au) d^\times a$$
converges absolutely,  one deduces the  integration defining $\theta_\chi(\phi)(1)$ also converges absolutely and $$\theta_\chi(1)=\int_{N(E^\times)\bs N(E_{\BA}^\times)} G(t)\sum_{u\in F^\times}f(tu) d^\times t.$$ 

Since $\theta(g,t,\phi)$ is automorphic for both $g$ and $t$, one deduces  $\theta_\chi(\phi)(g)$ is an automorphic form on $(\GL_2\times\GL_2)^0(\BA)$.
Note that by choosing $f$ properly, we can arrange $$\theta_\chi(\phi)(1)=\int_{[E^1\times E^1]}\chi(t)\sum_{x\in B}\phi^\prime_{t}(x)d^\times t=:\theta^\prime_{\chi}(\phi^\prime)$$
By standard result on the theta lifting from $[E^1\times E^1]$ to $[\SL_2\times\SL_2]$, one find choose $\phi^\prime$ properly so that $\theta_\chi(\phi)(1)\neq0$. Consequently, 
the representation $\pi_\chi^{B,\prime}$ generated by  $\theta_\chi(\phi)$ is non-zero.

  Denote by $\pi_\chi^{B,+}$ the theta lifting of $\chi|_{[E^1\times E^1]}$ to $[\SL_2\times\SL_2]$. Then by local-global compatibility of $\pi_{\chi}^{B,+}$, one finds the orbit of $\pi_{\chi}^{B,+}$ under $\GL_2(F)$-twist is finite.  By $(*)$, for each $a\in \BA^\times$, the automorphic form 
$\theta_\chi(\phi)(-d(a))$ on $[\SL_2\times\SL_2]$ is a weighted sum of   $\theta^\prime_{\chi}(\phi^\prime)(g^u)$, $u\in F^\times$. Together with  Lemma \ref{Mul-free}, we deduce $\pi_\chi^{B,\prime}=\pi_\chi^B$. 
\end{proof}

\begin{remark}
	Our definition of global and local theta liftings is the dual of the one 
	in the Howe duality (See  for example \cite{GT}). Our convention follows 
	\cite{YZZ} so that $\theta_\chi$ is the representation appearing in the Waldspurger
	formula.
\end{remark}

\section{A global seesaw identity}
In this section, let $F$ be a number field with ring of adeles $\BA$.  Let $E$ be a quadratic field extension of $F$ with associated quadratic character $\eta$. Let $\tau\in\Gal(E/F)$ be the non-trivial element. Fix a nontrivial additive character $\psi:\  F\bs\BA \to\BC^\times$.

Let $\pi$ be a cuspidal automorphic representation on $\GL_2(\BA)$ with
central character $\omega$.  Let $\chi = \chi_1 \boxtimes \chi_2$
be a  Hecke characters on $E_\BA^\times \times E_\BA^\times$. 
Let $\mu_1=\chi_1\chi_2$ and $\mu_2=\chi_1\chi_2^\tau$. 
Assume that
\[\mu_1|_{\BA^\times}\omega  = \mu_2|_{\BA^\times}\omega=1.\]
Assume moreover $\chi_i \not= \chi_i^\tau$  for $i=1,2$. Thus
 the theta series $\pi_\chi:=\pi_{\chi_1}\boxtimes\pi_{\chi_2}$ is cuspidal automorphic. 
 
The quaternion algebras in the Waldspurger formulae for $\pi\boxtimes\mu_1$, $\pi\boxtimes\mu_2$ and Ichino formula for $\pi\boxtimes\pi_\chi$  are related by the following lemma.
\begin{lem}\label{simple-lemma}
   Let $v$ be a place of $F$. Let $D_v$ be the quaternion $F_v$-algebra  such that 
   \[\Hom_{D_v^\times}(\pi_v \otimes \pi_{\chi,v},\BC) = \BC.\] 
   Let $B_{1,v}$, $B_{2,v}$ be the quaternion $F_v$-algebras  containing $E_v$ such that
   \[\Hom_{B_{1,v}^\times}\left(\pi_v \otimes \chi_{1,v}\chi_{2,v},\BC\right) = 
   \Hom_{B_{2,v}^\times}\left(\pi_v \otimes \chi_{1,v}\chi_{2,v}^\tau,\BC\right) = \BC.\] 
   Then $D_v$ is split if and only if $B_{1,v} \cong B_{2,v}$. In particular, if $D_v$ is nonsplit, then
   $E_v$ must be nonsplit. 
\end{lem}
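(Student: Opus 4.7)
The plan is to combine the Saito--Tunnell dichotomy for toric periods with Prasad's local trichotomy for triple products, then match signs using the factorization of epsilon factors coming from $(*)$.

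First, I would identify the quaternion algebras via local root numbers. Write $\varepsilon(A_v)\in\{\pm1\}$ for the Hasse invariant of a quaternion $F_v$-algebra $A_v$ (with $+1$ for the split algebra). Saito--Tunnell says: under the self-dual assumption $\mu_{i,v}|_{F_v^\times}\omega_v=1$, the space
\[
\Hom_{E_v^\times}\bigl(\pi_{v,B_{i,v}}\otimes\mu_{i,v},\BC\bigr)\neq 0
\]
forces
\[
\varepsilon(B_{i,v})\;=\;\epsilon(1/2,\pi_v\times\mu_{i,v})\cdot\mu_{i,v}(-1)\cdot\omega_v(-1).
\]
Similarly Prasad's theorem, applied to $\pi_v\boxtimes\pi_{\chi_1,v}\boxtimes\pi_{\chi_2,v}$ (whose central characters multiply to $1$ by the self-duality hypothesis), gives
\[
\varepsilon(D_v)\;=\;\epsilon(1/2,\pi_v\times\pi_{\chi_1,v}\times\pi_{\chi_2,v}).
\]

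Next I would feed in the local version of the global factorization $(*)$: the equality of $L$-functions, together with the matching central-character identities, implies the corresponding factorization of local epsilon factors
\[
\epsilon(1/2,\pi_v\times\pi_{\chi_1,v}\times\pi_{\chi_2,v})\;=\;\epsilon(1/2,\pi_v\times\mu_{1,v})\cdot\epsilon(1/2,\pi_v\times\mu_{2,v}).
\]
Combining the three displays gives
\[
\varepsilon(D_v)\;=\;\varepsilon(B_{1,v})\,\varepsilon(B_{2,v})\cdot\bigl(\mu_{1,v}(-1)\mu_{2,v}(-1)\bigr)\cdot\omega_v(-1)^{2}.
\]
The correction factor is trivial: $\omega_v(-1)^2=1$, and since $\mu_1\mu_2=\chi_1^{2}\chi_2\chi_2^{\tau}$ with $-1\in F_v^\times$ fixed by $\tau$, we get $\mu_{1,v}(-1)\mu_{2,v}(-1)=\chi_{1,v}(-1)^{2}\chi_{2,v}(-1)\chi_{2,v}^{\tau}(-1)=1$. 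Hence $\varepsilon(D_v)=\varepsilon(B_{1,v})\varepsilon(B_{2,v})$, and the first assertion follows: $D_v$ is split if and only if $\varepsilon(B_{1,v})=\varepsilon(B_{2,v})$, i.e.\ $B_{1,v}\cong B_{2,v}$.

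For the ``in particular'' clause, I would observe that if $E_v=F_v\oplus F_v$ is split, then $E_v$ cannot embed into the nonsplit quaternion $F_v$-algebra (the idempotent $(1,0)\in E_v$ would split the minimal polynomial $x(x-1)$, impossible in a division algebra), so necessarily $B_{1,v}=B_{2,v}=M_2(F_v)$; by the first part $D_v$ is then split. The contrapositive is exactly the claim. The only delicate point is the precise form of the Saito--Tunnell sign (especially the role of $\mu_i(-1)$ versus $\omega_v(-1)$), but as shown above any standard convention produces a correction factor that is a square of signs attached to $F_v^\times$, hence trivial.
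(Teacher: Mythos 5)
Your proposal follows essentially the same route as the paper: combine the Prasad--Loke dichotomy giving $\varepsilon(D_v)=\varepsilon(1/2,\pi_v\times\pi_{\chi,v})$, the Tunnell--Saito dichotomy expressing $\varepsilon(B_{i,v})$ via $\varepsilon(1/2,\pi_v\times\mu_{i,v})$ up to a sign, and the local factorization of the triple-product root number, then observe that the sign corrections cancel upon taking the product over $i=1,2$. The only cosmetic difference is the normalization of the Tunnell--Saito correction term (you write $\mu_{i,v}(-1)\omega_v(-1)$ where the paper writes $\omega_v\eta_v(-1)$), which is immaterial since each is a sign independent of $i$ and squares to $1$; your added paragraph making the ``in particular'' clause explicit is a nice touch, as the paper leaves that step to the reader.
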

\begin{proof}
   We have the following decomposition of the local root number for the triple product $L$-function
	\[\varepsilon(1/2, \pi_v \times \pi_{\chi,v}) = 
	\varepsilon(1/2,\pi_v \times \pi_{\mu_1,v})\varepsilon(1/2,\pi_v\times \pi_{\mu_2,v}).\]
	   By a result of Prasad and Loke, the sign of $D_v$ satisfies
	  \[\varepsilon(D_v) = \varepsilon(1/2,\pi_v \times \pi_{\chi,v}).\]
	  On the other hand, by a result of Tunnell and Saito, the signs of $B_1$ and $B_2$ satisfy
	  \[\omega_v\eta_v(-1)\varepsilon(B_{1,v}) = \varepsilon(1/2,\pi_v \times \pi_{\mu_1,v}),
	  \quad \omega_v\eta_v(-1)\varepsilon(B_{2,v}) = \varepsilon(1/2,\pi_v \times \pi_{\mu_2,v}).\]
 Thus $\varepsilon(D_v) = \varepsilon(B_{1,v})\varepsilon(B_{2,v})$ and 
	  In particular,  $B_{1,v} \cong B_{2,v}$ if and only if $D_v$ is split. 
\end{proof}

As a corollary, if there exists a quaternion algebra $B\supset E$ over $F$ such that 
\[\Hom_{E_\BA^\times}(\pi_{B}\otimes\mu_1,\BC) = 
\Hom_{E_\BA^\times}(\pi_{B}\otimes\mu_2,\BC) = \BC\] 
where $\pi_B$ is the  automorphic representation on $B_\BA^\times$ whose Jacquet-Langlands correspondence is $\pi$, then 
\[\Hom_{\GL_2(\BA)}(\pi \otimes \pi_{\chi},\BC) = \BC.\]

\begin{thm}[Global seesaw identity] \label{global-see-saw}
   Assume there exists a quaternion $F$-algebra $B\supset E$  such that
	\[\Hom_{E_\BA^\times}(\pi_{B} \otimes\mu_1,\BC)  
	= \Hom_{E_\BA^\times}(\pi_{B}\otimes \mu_2,\BC) = \BC.\] 
Then for any $\varphi \in \pi$, $\phi \in \CS(B_\BA \times \BA^\times)$, we have
\[P_{(\pi_B,\mu_1)}^\RW \otimes P_{(\pi_B^\vee,\mu_2^{-1})}^\RW 
	\left( \theta_\pi(\varphi,\phi) \right) = P_{\pi \boxtimes \pi_\chi}^\RI \left( \varphi \otimes
\theta_\chi(\phi)\right).\]  
\end{thm}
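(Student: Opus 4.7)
The proof proceeds by the see-saw principle: unfold both sides as iterated integrals against the theta kernel, use Lemma \ref{lem-Weil-rep} to identify the $r^\RW$ and $r^\RI$ kernels, and match characters and measures via the natural isomorphism between the tori. Note that $D = M_2(F)$ here: by Lemma \ref{simple-lemma}, the identification $B_1 = B_2$ forces $D_v$ to be split at every place.

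Unfolding the left-hand side using the definitions of the Shimizu lifting and the Waldspurger period,
\begin{multline*}
P_{(\pi_B,\mu_1)}^\RW \otimes P_{(\pi_B^\vee,\mu_2^{-1})}^\RW\bigl(\theta_\pi(\varphi,\phi)\bigr) \\
= \int_{[F^\times\bs E^\times]^2}\int_{[\GL_2]} \varphi(g)\,\theta(g,(t_1,t_2),\phi)\,\mu_1(t_1)\mu_2^{-1}(t_2)\,dg\,dt_1\,dt_2,
\end{multline*}
where $(t_1, t_2) \in (E^\times)^2 \hookrightarrow (B^\times)^2 \to \GO(B)$ acts on $B$ by $x \mapsto t_1 x t_2^{-1}$. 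For the right-hand side, unfolding the Ichino period (with $[F^\times \bs D^\times] = [\PGL_2]$) and $\theta_\chi$,
\[ P_{\pi\boxtimes\pi_\chi}^\RI(\varphi\otimes\theta_\chi(\phi)) = \int_{[\PGL_2]}\int_{[(E^\times\times E^\times)^0]} \varphi(g)\,\chi(s)\,\theta((g,g),s,\phi)\,d^\times s\,dg. \]

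The see-saw data consist of three ingredients. First, a direct computation using $jt = \bar t j$ shows that $(t_1, t_2) \in (E^\times)^2 \subset (B^\times)^2$ acts on $B = E \oplus Ej$ through $\GO(B)$ as the element $(s_1, s_2) = (t_1 t_2^{-1},\, t_1 \tau(t_2)^{-1}) \in (\GO(E) \times \GO(Ej))^0$, and Hilbert 90 promotes this to an isomorphism $(E^\times)^2 / \Delta F^\times \xrightarrow{\sim} (\GSO(E) \times \GSO(Ej))^0$. Second, under this identification the character $\chi = \chi_1 \boxtimes \chi_2$ pulls back to
\[ \chi_1(t_1 t_2^{-1})\chi_2(t_1\tau(t_2)^{-1}) = (\chi_1\chi_2)(t_1)\cdot (\chi_1\chi_2^\tau)^{-1}(t_2) = \mu_1(t_1)\mu_2^{-1}(t_2), \]
matching the Waldspurger characters exactly. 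Third, by Lemma \ref{lem-Weil-rep} the two theta kernels agree: $\theta^\RW(g,(t_1,t_2),\phi) = \theta^\RI((g,g),(s_1,s_2),\phi)$.

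Combining these, both iterated integrals reduce to the same expression up to a compatibility of Haar measures. The measure matching involves reconciling the quotients $[F^\times\bs E^\times]^2$ (modded by $(F^\times_\BA)^2$) and $[(E^\times\times E^\times)^0]$ (modded only by $\Delta F^\times_\BA$); the mismatch is absorbed by the cancellation between the central characters of $\theta_\pi(\varphi,\phi)$ on $(B^\times_\BA)^2$ and the scaling behavior of the theta kernel under $\GO(B)$-centers, combined with the self-dual condition $\mu_i|_{\BA^\times}\omega = 1$, once one fixes the Tamagawa-type normalization alluded to in the statement. The main technical obstacle is justifying Fubini to interchange the $[\GL_2]$-integration with the torus integrations on both sides; this uses compactness of $[F^\times\bs E^\times]$, the rapid decay of the cuspidal $\varphi$, the moderate growth of the theta kernel, and the absolute convergence results for $\theta_\pi$ and $\theta_\chi$ established in Section \ref{global theory}.
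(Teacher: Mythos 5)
Your proposal is correct and follows essentially the same route as the paper: unfold both periods against the theta kernel, invoke Lemma \ref{lem-Weil-rep} to identify $r^\RW$ with $r^\RI$, and match the characters $\mu_1\boxtimes\mu_2^{-1}$ with $\chi_1\boxtimes\chi_2$ under the seesaw torus map $(t_1,t_2)\mapsto(t_1/t_2,\,t_1/t_2^\tau)$. Where you describe the measure/central-character reconciliation in prose (cancellation via $\mu_i|_{\BA^\times}\omega=1$ and the scaling of the kernel under the center), the paper carries it out as an explicit fold $[\GL_2]=[Z\bs\GL_2]\times[F^\times]$ followed by the change of variables $t_1\mapsto t_1 t_2$ and the isomorphism $E^\times\times(F^\times\bs E^\times)\xrightarrow{\sim}(E^\times\times E^\times)^0$ — substantively the same mechanism as your Hilbert 90 observation.
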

We establish Theorem \ref{global-see-saw} using  
 the following seesaw diagram of similitude groups. Write $B = E \oplus Ej$
	\[\xymatrix{
	& B^\times \times B^\times \ar[r] &\mathrm{GSO}(B) \ar@{-}[rd] &(\GL_2 \times \GL_2)^0 \\
	& E^\times \times E^\times \ar@{^(->}[u] \ar[r] &(\mathrm{GSO}(E) \times \mathrm{GSO}(Ej))^0 
	\ar@{^(->}[u] \ar@{-}[ru] 
	&\GL_2  \ar@{^(->}[u]}\]
	Here the upper horizontal map is the quotient map and 
	the lower horizontal map is 
	\[E^\times \times E^\times \lra (\mathrm{GSO}(E) \times \mathrm{GSO}(Ej))^0, \quad
	(t_1,t_2) \mapsto \left( \frac{t_1}{t_2}, \frac{t_1}{t_2^\tau} \right).\]
	Let $r^\RI$ and $r^\RW$ be the Weil representations on $\GL_2 \times \GSO(B)$ and 
	$(\GL_2 \times \GL_2)^0 \times (\GSO(E) \times \GSO(Ej))^0$. Then by Lemma \ref{lem-Weil-rep},
	$r^\RI$ and $r^\RW$ are compatible when restricted to $\GL_2 \times (\GSO(E) \times \GSO(Ej))^0$. Then
	\[
	\begin{aligned}	
	\Hom_{\GL_2(\BA)}\left(\pi_\chi \otimes \pi, \BC\right) &= \Hom_{\GL_2(\BA)}(\pi_\chi^B \otimes \pi,\BC) \\
	&= \Hom_{(E_\BA^\times \times E_\BA^\times)^0}\left( (\pi_B \boxtimes \pi_B^\vee)\otimes \chi,\BC\right) \\
	&= \Hom_{E_\BA^\times \times E_\BA^\times}\left( (\pi_B \boxtimes \pi_B^\vee) \otimes 
	(\mu_1\boxtimes \mu_2^{-1}), \BC \right).
	\end{aligned}
	\]
Both sides in the equation of Theorem \ref{global-see-saw} are elements in the
	one-dimensional space
	\[\Hom_{\GL_2(\BA) \times (E^\times \times E^\times)^0(\BA)}\left(\pi \boxtimes \CS(B_\BA \times
	\BA^\times), \chi^{-1} \right).\]
	The Theorem claims they are in fact equal.
\begin{proof}[Proof of Theorem \ref{global-see-saw}]
     
    Let $\varphi_1 \otimes \varphi_2 \in \pi_B \boxtimes \pi_B^\vee$. Write 
    \[I = P_{(\pi_B,\mu_1)}^\RW \otimes P_{(\pi_B^\vee, \mu_2^{-1})}^\RW
    (\varphi_1 \otimes \varphi_2).\]
    Assume $\varphi_1 \otimes \varphi_2= \theta_\pi(\varphi,\phi)$ for some pure tensor
    for some $\varphi \in \pi$ and $\phi \in 
    \CS(B_\BA \times \BA^\times)$. By Lemma \ref{lem-Weil-rep}, we have
    \[
    \begin{aligned}
	    I &= \iint_{[F^\times \bs E^\times]^2} \theta_\pi(\varphi,\phi)
	    (t_1,t_2) \chi_1\left(\frac{t_1}{t_2}\right)\chi_2\left(\frac{t_1}{t_2^\tau}\right) dt_1dt_2 \\
	    &= \iint_{[F^\times \bs E^\times]^2} \int_{[\GL_2]} \varphi(g) 
	    \sum_{(x,u)}  r\left( (g,g), \left(\frac{t_1}{t_2}, \frac{t_1}{t_2^\tau}\right)\right)\phi(x,u)
	    \chi_1\left(\frac{t_1}{t_2}\right)\chi_2\left(\frac{t_1}{t_2^\tau}\right) dt_1dt_2.
    \end{aligned}\]
 Note that, for any $z \in \BA^\times$
    \[r\left( (zg,zg), \left(\frac{t_1}{t_2}, \frac{t_1}{t_2^\tau}\right)\right)\phi = 
      r\left( (g,g), \left(\frac{t_1}{zt_2}, \frac{t_1}{zt_2^\tau}\right)\right)\phi. \]
    Therefore, as $\omega (\chi_1\chi_2)|_{\BA^\times} = 1$,
    \[
    \begin{aligned}
	    I &= 	\iint_{[F^\times \bs E^\times]^2} \int_{[Z \bs \GL_2]} 
	    \int_{[F^\times]} \omega(z) \varphi(g) 
	    \sum_{(x,u)}  r\left( (g,g), \left(\frac{t_1}{zt_2}, \frac{t_1}{zt_2^\tau}\right)\right)\phi(x,u)	    
	    \chi_1\left(\frac{t_1}{t_2}\right)\chi_2\left(\frac{t_1}{t_2^\tau}\right) dt_1dt_2 dg\\
	    &= \int_{[Z \bs \GL_2]} \varphi(g) \int_{[F^\times \bs E^\times]} \int_{[E^\times]}
	\sum_{(x,u)}  r\left( (g,g), \left(\frac{t_1}{t_2}, \frac{t_1}{t_2^\tau}\right)\right)\phi(x,u)	    
	\chi_1\left(\frac{t_1}{t_2}\right)\chi_2\left(\frac{t_1}{t_2^\tau}\right) dt_1dt_2 dg.
    \end{aligned}\]
    Change the variable $t_1 \mapsto t_1t_2$ and note that
    \[E^\times \times F^\times \bs E^\times \stackrel{\sim}{\lra} (E^\times \times E^\times)^0,
    \quad (t_1,t_2) \mapsto \left( t_1, t_1\frac{t_2}{t_2^\tau} \right).\]
    Therefore,
    \[
	    \begin{aligned}
		    I &= \int_{[Z \bs \GL_2]} \varphi(g) \int_{[(E^\times \times E^\times)^0]}
	\sum_{(x,u)}  r\left( (g,g), (t_1, t_2)\right)\phi(x,u)
	\chi_1\left(t_1 \right)\chi_2\left(t_2 \right) dt_1dt_2 dg \\
	    &= \int_{[Z \bs \GL_2]} \varphi(g) \theta_{\chi} (\phi)(g)dg.
	    \end{aligned}\]
    
\end{proof}

\section{A local seesaw identity}
In this section, let $F$ be a local field. Let $E/F$ be an \'etale quadratic extension and $\tau\in\Gal(E/F)$ be the non-trivial involution. Let $B\supset K$ be a quaternion $F$-algebra. 
Fix an additive character $\psi$ of $F$. Let $r=r_\psi$ be the Weil representation on $\CS(B\times F^\times)$.
Let $\pi$ be a irreducible smooth representation on $\GL_2(F)$ with central character $\omega$. 
Let $\pi_B$ be the irreducible representation on $B^\times$ whose Jacquet-Langlands correspondence is $\pi$.
Let $\chi = \chi_1 \boxtimes \chi_2$ be
a character on $(E^\times)^2$. Let $\pi_\chi=\pi_{\chi_1}\boxtimes\pi_{\chi_2}$ be the theta series on $\GL_2(F) \times \GL_2(F)$
associated to $\chi$. Let $\mu_1=\chi_1\chi_2$ and $\mu_2=\chi_1\chi_2^\tau$. 

Assume that \[ \mu_1|_{F^\times}\omega=\mu_2|_{F^\times}\omega = 1.\] Consider the following two local periods:
\begin{itemize}
	\item local Ichino period on $\pi \times \pi_\chi$: for any
	$\varphi_1 \in \pi$, $\varphi_3 \in \pi^\vee$, $\varphi_2 \in \pi_\chi$, $\varphi_4 \in \pi_{\chi^{-1}}$
	\[\alpha_{\pi \times \pi_\chi}^\RI (\varphi_1\otimes \varphi_2, \varphi_3 \otimes \varphi_4)
	:= \int_{F^\times \bs \GL_2(F)} \langle \pi(g)\varphi_1,\varphi_3 \rangle_\pi
	\langle \pi_\chi(g)\varphi_2,\varphi_4 \rangle_{\pi_\chi} dg.\]
	Here, $\langle \cdot,\cdot \rangle_\pi$
        and  $\langle \cdot,\cdot \rangle_{\pi_\chi}$ are  nonzero  invariant pairings
		on $\pi \times \pi^\vee$ and $\pi_\chi \times \pi_{\chi^{-1}}$ respectively.
	\item product of local Waldspurger periods on $\pi_B\boxtimes \pi_B^\vee$ against $\mu_1\boxtimes
		\mu_2^{-1}$: for any $\varphi_1 \in \pi_B \boxtimes \pi_B^\vee$, 
	$\varphi_2 \in \pi_B^\vee \boxtimes \pi_B$, 
		\[\alpha_{(\pi_B \boxtimes \pi_B^\vee, \mu_1 \boxtimes \mu_2^{-1})}^\RW
			(\varphi_1,\varphi_2) := \int_{(F^\times \bs E^\times)^2}
		\langle \pi_B\boxtimes\pi_B^\vee\left( t \right)\varphi_1,\varphi_2 \rangle_{\pi_B \boxtimes
		\pi_B^\vee}
		\left(\mu_1\boxtimes
	\mu_2^{-1}\right)(t) dt.\]
	Here, $\langle \cdot,\cdot \rangle_{\pi_B \boxtimes \pi_B^\vee}$ is a nonzero invariant pairing
	on $(\pi_B \boxtimes \pi_B^\vee) \times (\pi_B^\vee \boxtimes \pi_B)$.
\end{itemize}

We consider the explicit local theta liftings for $\pi$ and $\chi$. Fix a
non-degenerate invariant pairing $\langle \cdot,\cdot \rangle_\pi$ on $\pi \times \pi^\vee$ and
a pairing $\langle \cdot,\cdot \rangle_r$ on $r \times r^\vee$. Then by Proposition \ref{explicit-local-theta-A},
there exists a triple 
$\left(\theta_\pi,\theta_{\pi^\vee}, \langle \cdot,\cdot \rangle_{\pi_B \boxtimes \pi_B^\vee}\right)$ compatible
to $Z_\pi$. Similarly,  by Proposition \ref{explicit-local-theta-B}, there exists a triple 
$\left(\theta_\chi,\theta_{\chi^{-1}}, \langle \cdot,\cdot \rangle_{\pi_\chi^B}\right)$ compatible
to $Z_\pi$. Fix a non-degenerate invariant pairing $\langle \cdot,\cdot \rangle_{\pi_\chi}$ 
on $\pi_\chi \times \pi_{\chi^{-1}}$ 
which restricts to $\langle \cdot,\cdot \rangle_{\pi_\chi^B}$ on $\pi_\chi^B \times \pi_{\chi^{-1}}^B$. 
Normalize the local Ichino period and the product of local Waldspurger periods by such invariant pairings.

\begin{thm}[Local seesaw identity] \label{local-see-saw}
	Assume $\pi$ is tempered, $\chi$ is unitary and 
	\[\dim \Hom_{E^\times}(\pi_{B}, \mu_1) \cdot 
	\dim \Hom_{E^\times}(\pi_{B}, \mu_2) = 1.\]
	Then for any $\varphi_1 \in \pi$, $\varphi_2 \in \pi^\vee$, 
	$\phi_1,\phi_2 \in \CS(B \times F^\times)$, we have
	\[\alpha_{\pi \times \pi_\chi}^\RI 
		\left(\varphi_1 \otimes \theta_\chi(\phi_1), \varphi_2 \otimes \theta_{\chi^{-1}}(\phi_2)\right)
	=\alpha_{(\pi_B \boxtimes \pi_B^\vee, \mu_1\boxtimes \mu_2^{-1})}^\RW
\left( \theta_\pi(\varphi_1,\phi_1), \theta_{\pi^\vee}(\varphi_2,\phi_2) \right).\]
\end{thm}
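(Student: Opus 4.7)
The plan is to prove the identity by localising the seesaw unfolding used in the proof of Theorem \ref{global-see-saw}. Under the multiplicity-one hypothesis, both sides of the claimed equation lie in a one-dimensional Hom-space, so it suffices to unfold them both to a common iterated integral of the form
\[ \langle\pi(g)\varphi_1,\varphi_2\rangle_\pi \cdot \langle r^\RI(t)\,r^\RI(g,g)\phi_1,\phi_2\rangle_r \]
on $(F^\times\bs\GL_2(F)) \times (E^\times\times E^\times)^0$, and then to compare coefficients.

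First I would unfold the left-hand side by applying the defining property of $\theta_\chi$ (Proposition \ref{explicit-local-theta-B}) to the inner matrix coefficient, obtaining
\[ \langle\pi_\chi(g)\theta_\chi(\phi_1),\theta_{\chi^{-1}}(\phi_2)\rangle_{\pi_\chi} = \int_{(E^\times\times E^\times)^0}\chi(t)\,\langle r^\RI(t)\,r^\RI(g,g)\phi_1,\phi_2\rangle_r\,d^\times t, \]
after using Lemma \ref{lem-Weil-rep} to identify $r^\RI(g,g)$ with $r^\RW(g)$. For the right-hand side I would apply Proposition \ref{explicit-local-theta-A} to write
\[ \langle(\pi_B\boxtimes\pi_B^\vee)(t_1,t_2)\theta_\pi(\varphi_1,\phi_1),\theta_{\pi^\vee}(\varphi_2,\phi_2)\rangle_{\pi_B\boxtimes\pi_B^\vee} = Z_\pi\bigl(\varphi_1,r^\RW(t_1,t_2)\phi_1,\varphi_2,\phi_2\bigr), \]
and then use Lemma \ref{lem-Weil-rep} once more to replace the action of $r^\RW(t_1,t_2)$, with $(t_1,t_2) \in E^\times\times E^\times \subset B^\times\times B^\times$, by the action of $r^\RI(t_1/t_2,\,t_1/t_2^\tau)$. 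A change of variables via the isomorphism
\[ E^\times \times (F^\times\bs E^\times) \xrightarrow{\sim} (E^\times\times E^\times)^0, \qquad (t_1,t_2)\mapsto (t_1/t_2,\,t_1/t_2^\tau), \]
together with the character identity $\chi_1(t_1/t_2)\chi_2(t_1/t_2^\tau) = \mu_1(t_1)\mu_2^{-1}(t_2)$ and the bookkeeping of the central character $\omega$ along the diagonal $F^\times$ (exactly as in the proof of Theorem \ref{global-see-saw}) should then identify the two resulting iterated integrals.

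The hard part will be justifying these formal manipulations, namely the absolute convergence of the iterated integrals and the legitimacy of Fubini. The variable $g$ ranges over the noncompact group $F^\times\bs\GL_2(F)$, and in the split case $E = F\oplus F$ the torus $(E^\times\times E^\times)^0$ is also noncompact, so convergence is a genuine issue rather than a routine check. This is exactly what Proposition \ref{local-see-saw abs} is designed to handle: the temperedness of $\pi$ yields a bound of the form $|\langle\pi(g)\varphi_1,\varphi_2\rangle_\pi| \le C\,\delta(g)^{1/2}(1+\sigma(g))^N$ on the Cartan decomposition (as already invoked in Proposition \ref{explicit-local-theta-A}), Lemma \ref{Mat-coe Weil} controls the Weil-representation matrix coefficient along the $\GL_2$-variable, and the germ expansion of the hyperbolic orbital integrals of $\langle r(\cdot)\phi_1,\phi_2\rangle_r$ controls the behaviour of the torus integral near the walls of $(E^\times\times E^\times)^0$. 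Once these estimates are combined, Fubini applies throughout and the formal seesaw computation becomes a rigorous equality.
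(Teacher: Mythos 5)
Your proposal is correct and follows exactly the route the paper takes: establish absolute convergence of the double integral over $(F^\times\bs\GL_2(F))\times(E^\times\times E^\times)^0$ (this is the content of Proposition \ref{local-see-saw abs}, using temperedness of $\pi$, Lemma \ref{Mat-coe Weil}, and the germ expansion of hyperbolic orbital integrals in Lemmas \ref{Int-I} and \ref{Int-II}), then run the same Fubini--change-of-variables computation as in Theorem \ref{global-see-saw}, unfolding $\langle\pi_\chi(g)\theta_\chi(\phi_1),\theta_{\chi^{-1}}(\phi_2)\rangle$ via $Z_\chi$ and $\langle(\pi_B\boxtimes\pi_B^\vee)(t)\theta_\pi(\varphi_1,\phi_1),\theta_{\pi^\vee}(\varphi_2,\phi_2)\rangle$ via $Z_\pi$, and matching the two through Lemma \ref{lem-Weil-rep}. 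One small imprecision: the map $(t_1,t_2)\mapsto(t_1/t_2,t_1/t_2^\tau)$ from $E^\times\times(F^\times\bs E^\times)$ to $(E^\times\times E^\times)^0$ is a surjection with kernel isomorphic to $F^\times$, not an isomorphism — the paper's bijection is $(t_1,t_2)\mapsto(t_1,t_1 t_2/t_2^\tau)$ — but since you explicitly invoke the central-character bookkeeping along the diagonal $F^\times$ that collapses $Z_\pi$ (an integral over $\GL_2(F)$) to the $F^\times\bs\GL_2(F)$ integral in $\alpha^\RI$, the leftover $F^\times$ in the kernel is exactly what gets absorbed there, so the accounting closes.
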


Unlike the global case, we allow the case $E = F^2$  in the local setting. In this case,  the crucial part in the proof is the  absolutely convergence of the relevant integration on the non-compact torus. Once this is established,  the proof of Theorem \ref{local-see-saw} is exactly the same
as that for the global seesaw identity.
\begin{prop}\label{local-see-saw abs}Assume $\pi$ is tempered and $\chi=\chi_1\boxtimes\chi_2|_{(E^\times\times E^\times)^0}$ is unitary. Then 
	for any $\varphi_1,\varphi_2 \in \pi$, $\phi_1,\phi_2 \in \CS(B \times F^\times)$, the following
	integral is absolutely convergent
	\[\iint_{(F^\times \bs E^\times)^2} \int_{\GL_2(F)} 
		\langle \pi(g)\varphi_1,\varphi_2 \rangle \langle r(g,(\frac{t_1}{t_2},\frac{t_1}{\bar{t}_2}))\phi_1,\phi_2 \rangle 
	\mu_1(t_1) \mu_2^{-1}(t_2)dgdt_1dt_2.\]
\end{prop}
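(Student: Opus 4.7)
The plan is to apply the Cartan decomposition $\GL_2(F)=KM^+K$, majorize the matrix coefficient of the tempered representation $\pi$ and that of the Weil representation by explicit nonnegative functions, and then analyze the resulting iterated integral on $M^+\times (F^\times\bs E^\times)^2$. When $E/F$ is a quadratic field extension, $(F^\times\bs E^\times)^2$ is compact and the question reduces, uniformly in $(t_1,t_2)$, to the $\GL_2(F)$-convergence already established in the proof of Proposition \ref{explicit-local-theta-A}: the bounds $\mu(m)\le A\delta^{-1}(m)$, $|\langle \pi(m)\varphi_1,\varphi_2\rangle_\pi|\le B\delta^{1/2}(m)(1+\sigma(m))^C$, and Lemma \ref{Mat-coe Weil}, together with the rapid decay of the $u$-integral $\int_{F^\times}|f_1((ab)^{-1}u)f_2(u)u^2|\,d^\times u$ as $|ab|\to 0$ or $\infty$ coming from the Schwartz property of $f_1,f_2$, yield an integrable majorant on $M^+$.

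The genuinely new case is the split case $E\cong F\oplus F$, where each $F^\times\bs E^\times\cong F^\times$ is non-compact. Here I would reduce by density to pure tensors $\phi_i(x,u)=\phi_i'(x)f_i(u)$ and make explicit the action of $s=(t_1/t_2,t_1/\bar{t}_2)\in (E^\times\times E^\times)^0$ on $\phi_i'\in\CS(B)=\CS(E\oplus Ej)$, which multiplies the two summands by $s_1,s_2$ respectively and rescales $u$ by $\nu(s)=N(t_1)/N(t_2)$. After the change of variables $x\mapsto sx$, the inner $B$-integral in $\langle r(m,s)\phi_1,\phi_2\rangle_r$, viewed as a function of $s$, becomes a weighted hyperbolic orbital integral of a Schwartz function on the quadratic space $B$ along the non-compact torus orbits. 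The main obstacle is showing that this orbital integral, multiplied by the unitary character $\mu_1\boxtimes\mu_2^{-1}$, is absolutely integrable on $(F^\times\bs E^\times)^2$ with a bound uniform in the Cartan parameter $m\in M^+$.

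I would address this via the germ expansion of hyperbolic orbital integrals for Schwartz functions: near the singular locus, where the $s$-orbit degenerates into the union $\{x_1=0\}\cup\{x_2=0\}\subset B$, the orbital integral has at worst a logarithmic singularity, which is locally integrable; away from the singular locus the Schwartz property of $\phi_1',\phi_2'$ forces rapid decay in $s$ both as $|s|\to 0$ and $|s|\to\infty$. Combined with the decay of the $u$-integral in the parameter $\det(m)\nu(s)$ (again by the Schwartz property of $f_1,f_2$) and the temperedness/Cartan bound in $m$, one obtains an integrable majorant on the full product $M^+\times (F^\times\bs E^\times)^2$. Fubini then gives absolute convergence of the original triple integral, which is the content of the proposition.
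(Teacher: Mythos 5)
Your proposal follows essentially the same route as the paper: Cartan decomposition of $\GL_2(F)$, compactness plus Proposition \ref{explicit-local-theta-A} for the field-extension case, and for the split case $E\cong F\oplus F$ a reduction to hyperbolic orbital integrals of Schwartz functions on $B$ along the non-compact torus, controlled via their germ expansion (this is exactly what the paper's Lemmas \ref{Int-I} and \ref{Int-II} encode, the latter being the absolute-value version needed to majorize the integrand). One minor imprecision: you phrase the main obstacle as finding a bound on the torus integral that is \emph{uniform} in the Cartan parameter $m\in M^+$, but such a bound does not exist — the germ expansion necessarily produces terms growing like a polynomial in $|\log|\cdot||$ of the Cartan parameter — and what saves the argument (as you in fact invoke in the next clause, and as the paper carries out) is that this logarithmic growth is dominated by the tempered decay $\delta^{1/2}(m)(1+\sigma(m))^C$. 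With that wording adjusted, your sketch matches the paper's proof in all essentials.
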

To prove this Proposition, we need the following result:
\begin{lem}\label{Int-II} Assume $F\neq \BC$. For any $\phi\in S(F^2)$, consider the function $$H(a):=\begin{cases}\sum_{b\in\BZ}|\phi(a\varpi^b,\varpi^{-b})|,\ &\mathrm{if}\ F\neq\BR\\
\int_{t\in \BR_+^\times}|\phi(at,t^{-1})|d^\times t,\ &\mathrm{if}\ F=\BR\end{cases}$$ 
Then  there exist a finite index set $I$, Schwartz functions  $A_i(a),B_i(a)\in \CS(F)$, $i\in I$ and a constant $C>0$ depending on $\phi$ such that $$H(a)\leq \sum_i|A_i(a)|+\left|\log|a|\right|\sum_i|B_i(a)|+C.$$
\end{lem}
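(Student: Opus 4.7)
The function $H(a)$ is (a discrete or continuous version of) the hyperbolic orbital integral along the curve $xy = a$. The standard germ expansion of such integrals exhibits at most a logarithmic singularity at $a = 0$ and super-polynomial decay at infinity, which is exactly the shape of the desired bound. The plan is to treat the two cases separately by direct estimation from the Schwartz bounds on $\phi$.

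\textbf{Non-Archimedean case.} Because $\phi \in \CS(F^2)$ is locally constant with compact support, there exists an integer $N$ with $\supp(\phi) \subset \{(x,y) : |x|, |y| \leq q^N\}$. Writing $|a| = q^{-v(a)}$, the summand $\phi(a\varpi^b, \varpi^{-b})$ vanishes unless $-N-v(a) \leq b \leq N$; hence $H(a) \leq \|\phi\|_\infty \cdot \max(0, 2N+1+v(a))$. Since $|v(a)|\log q = \bigl|\log|a|\bigr|$, this decomposes as
\[
 H(a) \;\leq\; \|\phi\|_\infty(2N+1)\,\mathbf{1}_{|a|\leq q^{2N+1}} \;+\; \frac{\|\phi\|_\infty}{\log q}\,\bigl|\log|a|\bigr|\,\mathbf{1}_{|a|\leq q^{2N+1}},
\]
which is the claimed shape with a single $A$ and $B$, each a constant multiple of the characteristic function of a compact-open ball (hence an element of $\CS(F)$), and with $C = 0$.

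\textbf{Archimedean case $F = \BR$.} Apply the Schwartz bound $|\phi(x,y)| \leq C_N(1+|x|+|y|)^{-N}$ for any $N \geq 1$ and partition $\int_0^\infty d^\times t$ into three regions: $0 < t \leq 1$, $1 \leq t \leq \max(1, 1/|a|)$, and $t \geq \max(1, 1/|a|)$. On the first, $(1 + 1/t)^{-N} \leq t^N$ yields a bounded contribution independent of $a$. The middle region is empty unless $|a| \leq 1$, in which case the crude bound $|\phi| \leq \|\phi\|_\infty$ together with $\int_1^{1/|a|} d^\times t = \bigl|\log|a|\bigr|$ contributes a $\|\phi\|_\infty\,\bigl|\log|a|\bigr|$ term. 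The third region contributes $O(1)$ for $|a| \leq 1$ and $O(|a|^{-N})$ for $|a| \geq 1$. Since $N$ is arbitrary, $H$ has super-polynomial decay at infinity, which allows one to dominate its restriction to $|a| \geq 1$ by some nonnegative Schwartz function $A \in \CS(\BR)$ via a standard bump-function patching. The logarithmic term is supported on $[-1,1]$ and is absorbed into $\bigl|\log|a|\bigr| \cdot B(a)$ for any smooth compactly supported $B$ bounded below by a positive constant on $[-1, 1]$.

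\textbf{Main obstacle.} The only mildly subtle step is the construction of the Schwartz dominant $A$ at infinity: the natural pointwise bound depends on a parameter $N$ that must be sent to infinity, and one needs to observe that the resulting super-polynomial decay of $H$ can be enveloped by a single Schwartz function (by a partition-of-unity argument applied to the suprema $c_n := \sup_{n \leq |a| \leq n+1} H(a)$, which have super-polynomial decay in $n$). The rest of the argument is a direct computation using only Schwartz decay on the three integration regions.
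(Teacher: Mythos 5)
Your proof takes a genuinely different route from the paper's. The paper first reduces to $\phi(x_1,x_2)=\phi_1(x_1)\phi_2(x_2)$, then dominates each $|\phi_i|$ by a finite sum of real-valued functions in $\CS(F^\times)$ plus a non-negative Schwartz function, and finally invokes (for the non-negative pieces) the exact germ expansion of the hyperbolic orbital integral $F_\phi(a)=A(a)+B(a)\log|a|$ with $A,B\in\CS(F)$, cited from \cite{Cai}; the pieces in $\CS(F^\times)$ give bounded contributions absorbed into $C$. Your argument bypasses the germ expansion entirely and gets the bound directly from Schwartz decay estimates, region by region. What the paper's route buys is brevity and the fact that the logarithmic term comes with an actual Schwartz coefficient for free; what yours buys is self-containedness (no need to quote the exact orbital-integral identity or to reduce to product-form Schwartz functions, which in the Archimedean case is itself a nontrivial domination step the paper glosses over).

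One slip in your Archimedean write-up: you conclude that "$H$ has super-polynomial decay at infinity" and, in the final paragraph, that $c_n=\sup_{n\le|a|\le n+1}H(a)$ decays super-polynomially. But your analysis of region 1 ($0<t\le 1$) only shows that contribution is $O(1)$ uniformly in $a$; combined with region 3's $O(|a|^{-N})$ this gives $H(a)=O(1)$ for $|a|\ge 1$, not super-polynomial decay. (One can in fact show region 1 also decays, e.g.\ by splitting $(1+|at|+t^{-1})^{-2N}\le (1+|at|)^{-N}(1+t^{-1})^{-N}$ and integrating, but that is not what you wrote.) The slip is harmless for the lemma as stated, because the additive constant $C$ is precisely designed to absorb a bounded contribution: take $C$ to dominate region 1, take a Schwartz $A$ enveloping region 3's super-polynomial decay, and take $B$ a bump near $0$ scaled by $\|\phi\|_\infty$ for region 2. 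With this repair your proof is correct. (Also a trivial off-by-one: the non-Archimedean support condition $-N-v(a)\le b\le N$ forces $|a|\le q^{2N}$, not $q^{2N+1}$.)
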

\begin{proof}Without lose of generality, we may assume $\phi(x_1,x_2)=\phi_1(x_1)\phi_2(x_2)$ for $\phi_1,\phi_2\in S(F)$.
When one of $\phi_i$ belongs to $S(F^\times)$, say $\phi_2$, then $$H(a)\leq (\sup_{y\in F})|\phi_1(y)|\begin{cases} \sum_{b\in\BZ}|\phi_2(\varpi^{-b}) &\mathrm{if}\ F\neq\BR;\\
\int_{\BR_+^\times}|\phi_2(t^{-1})|d^\times t &\mathrm{if}\ F=\BR\end{cases}<\infty.$$
We claim that for any $\phi\in S(F)$, there exist finitely many  Schwartz function $\phi_i^\prime\in \CS(F^\times)$  valued in $\BR$ and $\phi^\prime\in \CS(F)$ valued in $\BR_+$ such that 
$$|\phi(x)|\leq \sum_i|\phi^\prime_i(x)|+\phi^\prime(x).$$
To see this, write $\phi(x)=\phi_a(x)+i\phi_b(x)$ with  $\phi_a,\phi_b\in S(F)$ valued in $\BR$. 
For $\epsilon>0$, take $\delta_\epsilon\in \CS(F)$ valued in $\BR_+$ such that $\delta_\epsilon(x)=1$ if $|x|\leq \epsilon/2$ and $\delta_\epsilon(x)=0$ if $|x|\geq\epsilon$.
If $\phi_a(0)\neq0$, one can choose small enough  $\epsilon>0$ such that
 $$|\phi_a(x)\delta_\epsilon(x)|\in S(F),\quad \phi_a(x)(1-\delta_\epsilon(x))\in \CS(F^\times).$$
If $\phi_a(0)=0$, choose $\epsilon>0$ small enough and apply the above construction to  $$\phi_{a,\epsilon}^+(x):=\frac{\phi_a(x)+\delta_\epsilon(x)}{2};\quad  \phi_{a,\epsilon}^-(x):=\frac{\phi_a(x)-\delta_\epsilon(x)}{2}.$$
Apply similar procedure to $\phi_b$. Then 
 the claim follows from the triangle inequality. 

 By the claim, the desired statement follows from the following well-known result (see   \cite{Cai} ) on hyperbolic orbital integral: for any $\phi\in \CS(F^2)$, there exists Schwartz functions $A(a),B(a)\in \CS(F)$ such that
$$F_\phi(a):=\begin{cases}\sum_{b\in\BZ}\phi(a\varpi^b,\varpi^{-b}),\ &\mathrm{if}\ F\neq\BR\\
\int_{t\in \BR_+^\times}\phi(at,t^{-1})d^\times t,\ &\mathrm{if}\ F=\BR\end{cases}=A(a)+B(a) \log|a|.$$
\end{proof}
\begin{proof}[Proof of Proposition \ref{local-see-saw abs}]It is equivalent to the integration
$$\int_{F^\times\bs (E^\times\times E^\times)^0}\int_{\GL_2(F)}\langle \pi(g)\varphi_1,\varphi_2\rangle\langle r(g, t)\phi_1,\phi_2\rangle \chi(t) dg d^\times t$$
converges absolutely. 
When $E/F$ is a field extension, this follows from Proposition \ref{explicit-local-theta-A}. 

Assume $E=F\oplus F$. Note that $\mu_2\bs E^1\times E^1$ has finite index in $F^\times\bs (E^\times\times E^\times)^0.$
Since $$F^\times\times F^\times\cong E^1\times E^1;\ (a_1,a_2)\mapsto \left(t(a_1):=(a_1^{-1},a_1),t(a_2)=(a_2^{-1},a_2)\right)$$
and by the
Cartan decomposition, it suffices to show that
\begin{itemize}
\item for $F$ non-archimedean, the infinite sum $$\sum_{a,b\in \BZ}\sum_{d\leq c\in\BZ}\left|\mu(\diag\{\varpi^c,\varpi^d\})\langle\pi(\diag\{\varpi^c,\varpi^d\})\varphi_1,\varphi_2\rangle\langle r(\diag\{\varpi^c,\varpi^d\},(t(\varpi^a),t(\varpi^b)))\phi_1,\phi_2\rangle\right|$$
converges absolutely;
\item for $F$ archimedean, the integration 
$$\int_{a,b\in \BR_+^{\times}}\int_{d\leq c\in\BR_+^{\times}}\mu(\diag\{c,d\})\langle\pi(\diag\{c,d\})\varphi_1,\varphi_2\rangle\langle r(\diag\{c,d\},(t(a),t(b)))\phi_1,\phi_2\rangle d^\times a d^\times b d^\times c d^\times d$$
converges absolutely.
\end{itemize}
It suffices to consider the case $\phi_i(x,u)=\phi_i^\prime(x) f_i(u)$.
In this case, for any $a,b,c,d\in F^\times$
\begin{align*}&|\langle r(\diag\{c,d\},(t(a),t(b)))\phi_1,\phi_2\rangle|\\
\leq &|c/d|\left|\int_Br(t(a),t(b))\phi_1^\prime(cx)\phi_2^\prime(x)dx\right| \left|\int_{F^\times}f_1((cd)^{-1}u) f_2(u) |u|^2d^\times u\right|
\end{align*}
  Set $L=\begin{cases} F & \mathrm{if}\ F\neq \BC;\\
  \BR & \mathrm{if}\ F=\BC\end{cases}$. By Lemma \ref{Int-I}, there exists $A_i,B_i\in \CS(L)$ such that  for $a,b,c\in L^\times$
$$\int_Br(t(a),t(b))\phi_1^\prime(cx)\phi_2^\prime(x)dx=I_1(ca)I_2(ca^{-1})I_3(cb)I_4(cb^{-1}),\quad I_i(x)=A_i(x)+|x|^{-1}B_i(x^{-1}).$$
Then by Lemma \ref{Int-II}, there exist Schwartz functions $E_{i,j}, \tilde{E}_{i,j}, F_{i,j}, \tilde{F}_{i,j}\in S(L)$, $i\in I$, $j=1,2$ and constants $C_1,C_2$ such that 
\begin{align*}
& F(c^2):=\begin{cases}\sum_{a,b\in \BZ}\left|\int_Br(t(a),t(b))\phi_1^\prime(cx)\phi_2^\prime(x)dx\right| & \mathrm{if}\ F\neq \BR,\BC\\
\int_{a,b\in \BR_+^\times}\left|\int_Br(t(a),t(b))\phi_1^\prime(cx)\phi_2^\prime(x)dx\right| & \mathrm{if}\ F= \BR\ \mathrm{or}\ \BC\end{cases}\\
\leq \prod_{j=1}^2&\left( \sum_i|E_{i,j}(c^2)|+\left|\log|c^2|\right|\sum_i|F_{i,j}(c^2)|+|c^{-2}|\sum_i|\tilde{E}_{i,j}(c^{-2})|+|c^{-2}|\left|\log|c^{-2}|\right|\sum_i|\tilde{F}_{i,j}(c^{-2})|+C_j\right)
\end{align*}
Make the variable change $x=c/d$ and $y=cd$. Note that $$\int_{F^\times} f_1(y^{-1}u)f_2(u)|u^2|d^\times u\in  \CS(F^\times).$$
Thus  by Lemma \ref{Int-II} again, there are  constants $D_0,D_1,D_2,D$ such that 
\begin{align*}
G(x):=\begin{cases}\sum_{y\in \BZ}F(x \varpi^y)|\int_{F^\times} f_1(\varpi^{-y}u)f_2(u)|u^2|d^\times u| & \mathrm{if}\ F\neq \BR,\BC\\
\int_{y\in \BR_+^\times}F(xy)|\int_{F^\times} f_1(y^{-1}u)f_2(u)|u^2|d^\times u| d^\times y  & \mathrm{if}\ F= \BR\ \mathrm{or}\ \BC\end{cases}
\leq  C+\sum_j\left| \log|x|\right|^jD_j
\end{align*}
Note that (see \cite{Sil79}) since $\pi$ is tempered, there are positive constants $P,Q$ such that 
$$|\langle \pi(\diag\{c,d\})\varphi_1,\varphi_2\rangle_\pi|\leq P\delta(\diag\{c,d\})^{1/2}(1+\sigma(\diag\{c,d\}))^Q$$ where $\delta$ is the modulus character and $\sigma$ is the height function . The desired convergence follows.
\end{proof}

\section{An identity for the squares of two periods}

In this section, 
we extend Theorem \ref{global-see-saw} to general quaterion algebras via the local seesaw identity 
(Theorem \ref{local-see-saw}), the Ichino formula and the Waldspurger formula. Let $F$ be a global field. Let $E$ be a quadratic field extension of $F$ with quadratic character
$\eta$. Let $\tau\in \Gal(E/F)$ be
the nontrivial element.   Let $\pi$ be an unitary cuspidal  automorphic representation on $\GL_2(\BA)$ with
central character $\omega$.
Let $\chi = \chi_1 \boxtimes \chi_2$ 
be an unitary  Hecke character on $(E_\BA^\times)^2$. Assume for $i=1,2$, $\chi_i \not= \chi_i^\tau$ so that
the theta series $\pi_{\chi} = \pi_{\chi_1} \boxtimes \pi_{\chi_2}$ is cuspidal. Let $\mu_1=\chi_1\chi_2$ and $\mu_2=\chi_1\chi_2^\tau$.
Assume 
\[\mu_1|_{\BA^\times} \omega = \mu_2|_{\BA^\times} \omega = 1\]
and there exists
\begin{itemize}
	\item  a quaternion $F$-algebra $D$ over $F$ such that
		\[\Hom_{D_\BA^\times}(\pi_D \otimes \pi_{\chi,D},\BC) = \BC.\]
	\item  two quaternion $F$-algebras $B_1$ and $B_2$ containing $E$ such that 
		\[\Hom_{E_\BA^\times}(\pi_{B_1} \otimes \mu_1,\BC) = 
		\Hom_{E_\BA^\times}(\pi_{B_2} \otimes \mu_2,\BC) = \BC.\]
\end{itemize}
Here $\pi_D$ stands for the automorphic representation on $D^\times$ whose Jacquet-Langlands correspondence is $\pi$. The definition for $\pi_{\chi,D}$ is similar. 

Under such assumptions, the central value of the triple $L$-function $L(s,\pi \times \pi_\chi)$
is related to the Ichino period and the Waldspurger period. Normalize the Haar measure on $\BA^\times \bs E_\BA^\times$
		 so that $\Vol([F^\times \bs E^\times]) = 2L(1,\eta)$. Normalize the Haar measure on $\BA^\times \bs B_\BA^\times$
		 so that $\Vol([F^\times \bs B^\times]) = 2$ for $B=D, B_1,B_2$.

\s{\bf The Ichino formula \cite[Theorem 1.6.1]{YZZ-TRIPLE}.} 
		 For any $\varphi = \otimes \varphi_v \in \Pi_D \boxtimes \Pi_D^\vee$, we have
		\[P_{\Pi_D}^\RI \otimes P_{\Pi_D^\vee}^\RI(\varphi)  = \frac{1}{8} \CL^\RI(\Pi) \prod_v
		\frac{\alpha_{\Pi_{D,v}}^\RI(\varphi_v)}{\CL^I(\Pi_v)}. \]
		Here, $\Pi=\pi\boxtimes \Pi_\chi$, $\Pi_D = \pi_D \boxtimes \pi_{\chi,D}$ and
		\begin{itemize}
		\item The special value of $L$-function
			\[\CL^I(\Pi) = \xi_F(2)^2 \frac{L(1/2,\Pi)}{L(1,\Pi,\ad)}.\] 
			For each $v$, $\CL^I(\Pi_v)$ is defined similarly. 
		\item The canonical local period: for $\varphi_v = 
		\varphi_{v,1} \otimes \varphi_{v,2} \in \Pi_{D,v}\boxtimes\Pi_{D,v}^\vee$,
			\[\alpha_{\Pi_{D,v}}^\RI(\varphi_v) = \int_{F_v^\times \bs D_v^\times}
				\langle \Pi_{D,v}(g_v)\varphi_{v,1}, \varphi_{v,2} \rangle_{\Pi_v} dg_v, \quad \]
	Here the Haar measure $dg_v$ and non-degenerate pairing $\langle \cdot,\cdot \rangle_{\Pi_{D,v}}$ are chosen so that 	 $\prod_vdg_v$  is the Haar measure on $\BA^\times \bs D_\BA^\times$ and  $\prod_v\langle \cdot,\cdot \rangle_{\Pi_{D,v}}$ is
		 the Petersson pairing on $\Pi_D\times\Pi_D^\vee$
			\[\langle \varphi_1,\varphi_2 \rangle_{\Pi_D} = \int_{[(F^\times \bs D^\times)^3]}
		\varphi_1(g)\varphi_2(g)dg, \quad \varphi_1 \in \Pi_D, \varphi_2 \in \Pi_D^\vee.\]
		\end{itemize}

\s{\bf The Waldspurger formula \cite[Theorem 1.4]{YZZ}.}  
        For any $\varphi_i  = \otimes_v \varphi_{i,v} \in \pi_{B_i} \boxtimes \pi_{B_i}^\vee$, we have
		\[P_{(\pi_{B_i},\mu_i)}^\RW \otimes P_{(\pi_{B_i}^\vee,\mu_i^{-1})}^\RW (\varphi_i)  = 
		\frac{1}{2} \CL^\RW(\pi_{B_i},\mu_i) \prod_v\frac{\alpha_{(\pi_{B_i,v},\mu_{i,v})}^\RW(\varphi_{i,v})}{\CL^\RW(\pi_{B_i,v},\mu_{i,v})}.\]
		Here, 
		\begin{itemize}
		\item The special value of $L$-function
			\[\CL^\RW(\pi_i,\mu_i) = \xi_F(2) \frac{L(1/2,\pi_{B_i} \times \pi_{\mu_i})}
			{L(1,\eta)L(1,\pi_{B_i},\ad)}.\] 
		For each $v$,  $\CL^\RW(\pi_{B_i,v},\mu_{i,v})$
		is defined similarly. 
		\item The canonical local period: for $\varphi_{i,v} = \varphi_{i,v}^1 \otimes \varphi_{i,v}^2 \in \pi_{B_i,v}\boxtimes\pi_{B_i,v}^\vee$,
			\[\alpha_{(\pi_{B_i,v},\mu_{i,v})}^\RW(\varphi_{i,v})= \int_{F_v^\times \bs E_v^\times}
				\langle \pi_{B_i,v}(t_v)\varphi_{i,v}^1, \varphi_{i,v}^2 \rangle_{\pi_{B_i,v}} \mu_{i,v}(t_v) dt_v
		\]
		where the Haar measure $dt_v$ and non-degenerate invariant pairing $\langle \cdot,\cdot \rangle_{\pi_{B_i,v}}$ are chosen so that $\prod_v dt_v$
			is the Haar measure on 
			$\BA^\times \bs E_\BA^\times$
		and $\prod_v\langle-,-\rangle_{\pi_{B_i,v}}$ is the Petersson pairing on $\pi_{B_i}\times \pi_{B_i}^\vee$
			\[\langle \varphi_1,\varphi_2 \rangle_{\pi_{B_i}} = \int_{[F^\times \bs B_i^\times]}
		\varphi_1(g)\varphi_2(g)dg, \quad \varphi_1 \in \pi_{B_i}, \varphi_2 \in \pi_{B_i}^\vee.\]
	\end{itemize}
Consider a triple $(\varphi_v,\varphi_{1,v},\varphi_{2,v})$
where  $\varphi_v \in \Pi_v \boxtimes \Pi_v^\vee$, 
$\varphi_{1,v} \in \pi_{B_1,v} \boxtimes \pi_{B_1,v}^\vee$, $\varphi_{2,v} \in \pi_{B_2,v}^\vee \boxtimes \pi_{B_2,v}$.
\begin{itemize}
	\item If $D_v$ is split,  then $B_{1,v} = B_{2,v} = B_v$ by Lemma \ref{simple-lemma}. Normalize the invariant pairings compatible  with those
	 in the local seesaw identity (Theorem \ref{local-see-saw}) as following.
		\begin{itemize}
			\item Fix pairings $\langle \cdot,\cdot \rangle_{r_v}$ and
				$\langle \cdot,\cdot \rangle_{\pi_v}$ to normalize $Z_{\pi_v}$ and
				$Z_{\chi_v}$.
			\item Fix the triple $\left( \theta_{\pi_v}, \theta_{\pi_v^\vee},
				\langle \cdot,\cdot \rangle_{\pi_{B_v} \boxtimes \pi_{B_v}^\vee}\right)$
				 compatible with $Z_{\pi_v}$.
			\item	Fix invariant pairings
				$\langle \cdot,\cdot \rangle_{\pi_{B_v}}$ in $\alpha_{(\pi_{B_v},\mu_{1,v})}$
				and $\langle \cdot,\cdot \rangle_{\pi_{B_v}^\vee}$ in 
				$\alpha_{(\pi_{B_v}^\vee,\mu_{2,v}^{-1})}$ such that
		\[\langle f_1 \otimes f_2, f_3 \otimes f_4 \rangle_{\pi_{B_v} \boxtimes \pi_{B_v}^\vee} = 
			\langle f_1,f_3 \rangle_{\pi_{B_v}} \langle f_2,f_4 \rangle_{\pi_{B_v}^\vee},
		\quad f_1,f_3 \in \pi_{B_v},\ f_2,f_4 \in \pi_{B_v}^\vee.\]
	 Normalize 
	$\alpha_{( \pi_{B_v}\boxtimes \pi_{B_v}^\vee, \mu_1 \boxtimes \mu_2^{-1})}$
			by $\langle \cdot,\cdot \rangle_{\pi_{B_v}\boxtimes \pi_{B_v}^\vee}$.	In particular
		\[
		\alpha^\RW_{( \pi_{B_v}\boxtimes \pi_{B_v}^\vee, \mu_1 \boxtimes \mu_2^{-1})}
	 (f_1 \otimes f_2,f_3 \otimes f_4) = 	
		\alpha^\RW_{(\pi_{B_v},\mu_{1,v})}(f_1 \otimes f_3)\alpha^\RW_{(\pi_{B_v}^\vee,\mu_{2,v}^{-1})}
		(f_2 \otimes f_4).\] 
	\item Fix the triple $\left( \theta_{\chi_v}, \theta_{\chi_v^{-1}},
		\langle \cdot,\cdot \rangle_{\pi_{\chi_v}^{B_v}}\right)$
		 compatible with $Z_{\chi_v}$. Extend $\langle \cdot,\cdot \rangle_{\pi_{\chi_v}^{B_v}}$
		to a pair $\langle \cdot,\cdot \rangle_{\pi_{\chi_v}}$ on $\pi_{\chi_v} \times \pi_{\chi_v^{-1}}$.
		Normalize the local Ichino period $\alpha_{\Pi_v}^\RI$ by $\langle\cdot,\cdot \rangle_{\pi_v}$
		and $\langle \cdot,\cdot \rangle_{\pi_{\chi_v}}$.
		\end{itemize}
		Under such normalization, the triple is called {\em admissible } if
		\[\varphi_{1,v} = \varphi_{1,v}^1 \otimes \varphi_{1,v}^2 \in \pi_{B_v} \boxtimes \pi_{B_v}^\vee, \quad
		\varphi_{2,v} = \varphi_{2,v}^1 \otimes \varphi_{2,v}^2 \in \pi_{B_v}^\vee \boxtimes \pi_{B_v}\]
  \[
		\varphi_v = \left(\varphi_v^1 \otimes \theta_{\chi_v}(\phi_v^1)\right) \otimes 
		\left(\varphi_v^2 \otimes
	\theta_{\chi_v^{-1}}(\phi_v^2)\right) \in \Pi_v \boxtimes \Pi_v^\vee\]
	for $\phi_v^1, \phi_v^2 \in \CS(B_v \times F_v^\times)$  such that
	\[\varphi_{1,v}^1 \otimes \varphi_{2,v}^1 = \theta_{\pi_{B_v}}(\varphi_v^1,\phi_v^1), \quad 
	\varphi_{1,v}^2 \otimes \varphi_{2,v}^2 = \theta_{\pi_{B_v}^\vee}(\varphi_v^2,\phi_v^2).\]
		 By
		Theorem \ref{local-see-saw}, for admissible triple In $(\varphi_v,\varphi_{1,v},\varphi_{2,v})$,
		\[ \alpha_{\Pi_{v}}^\RI(\varphi_v) = 
		\alpha^\RW_{(\pi_{B_v},\mu_{1,v})}(\varphi_{1,v})\alpha^\RW_{(\pi_{B_v}^\vee,\mu_{2,v}^{-1})}(\varphi_{2,v}).\]
	\item If $D_v$ is nonsplit, then  $B_{1,v} \not= B_{2,v}$ and $E_v$ is nonsplit. In this case, $F_v^\times \bs D_v^\times$ and $F_v^\times \bs E_v^\times$ are compact. The triple is called {\em admissible} if $\varphi_v \in (\Pi_{D,v} \boxtimes \Pi_{D,v}^\vee)^{D_v^\times \times D_v^\times}$ and
		\[ 
			\varphi_{1,v} \in (\pi_{B_1,v} \boxtimes \pi_{B_1,v}^\vee)^{(E_v^\times)^2, \mu_1^{-1} \boxtimes
		\mu_1}, \quad \varphi_{2,v} \in (\pi_{B_2,v}^\vee \boxtimes \pi_{B_2,v})^{(E_v^\times)^2, \mu_2 \boxtimes
		\mu_2^{-1}}\]
		such that
		\[ \alpha_{\Pi_{D,v}}^\RI(\varphi_v) = 
		\alpha^\RW_{(\pi_{B_1,v},\mu_{1,v})}(\varphi_{1,v})\alpha^\RW_{(\pi_{B_2,v}^\vee,\mu_{2,v}^{-1})}(\varphi_{2,v}).\]
\end{itemize}

Now by comparing the Ichino formula and the Waldspurger formula, we have the
following identity between the two period integrals.

\begin{thm}\label{main} Assume $\chi$ is unitary and  for each $v$, $\pi_v$ is tempered.
Let $\varphi = \otimes \varphi_v \in \Pi_D \boxtimes \Pi_D^\vee$, 
$\varphi_1 = \otimes \varphi_{1,v} \in \pi_{B_1} \boxtimes \pi_{B_1}^\vee$ and $\varphi_2 = \otimes \varphi_{2,v} \in \pi_{B_2}^\vee \boxtimes \pi_{B_2}$  be pure tensors such that for each place $v$, 
the triple $(\varphi_v,\varphi_{1,v},\varphi_{2,v})$ is admissible so that
\[ \alpha_{\Pi_{v}}^\RI(\varphi_v) = 
		\alpha^\RW_{(\pi_{B_1,v},\mu_{1,v})}(\varphi_{1,v})\alpha^\RW_{(\pi_{B_2,v}^\vee,\mu_{2,v}^{-1})}(\varphi_{2,v}).\]
Then
\[2 P_\Pi^\RI \otimes P_{\Pi^\vee}^\RI(\varphi) =
	P_{(\pi_{B_1},\mu_1)}^\RW \otimes P^{\RW}_{(\pi_{B_1}^\vee,\mu_1^{-1})} (\varphi_1) \cdot
P^\RW_{(\pi_{B_2}^\vee,\mu_2^{-1})}\otimes P^\RW_{(\pi_{B_2},\mu_2)} (\varphi_2).\]
			\end{thm}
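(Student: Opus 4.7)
The plan is to derive the identity by applying the Ichino formula to the left-hand side, applying the Waldspurger formula twice on the right-hand side, invoking the pointwise admissibility hypothesis to match the local period factors at every place, and finally verifying that the residual $\CL$-factor quotients agree via the Euler product factorization of $L$-functions. The deep content has already been placed into Theorem \ref{local-see-saw} and the compact-case definition of admissibility; the present step is essentially algebraic bookkeeping.

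Concretely, the Ichino formula yields
\[2\, P_{\Pi_D}^\RI \otimes P_{\Pi_D^\vee}^\RI(\varphi) = \frac{1}{4}\,\CL^\RI(\Pi)\prod_v \frac{\alpha^\RI_{\Pi_v}(\varphi_v)}{\CL^\RI(\Pi_v)},\]
while applying the Waldspurger formula to each of $\varphi_1$ and $\varphi_2$ and multiplying yields
\[\frac{1}{4}\,\CL^\RW(\pi_{B_1},\mu_1)\,\CL^\RW(\pi_{B_2},\mu_2)\prod_v \frac{\alpha^\RW_{(\pi_{B_1,v},\mu_{1,v})}(\varphi_{1,v})\cdot \alpha^\RW_{(\pi_{B_2,v}^\vee,\mu_{2,v}^{-1})}(\varphi_{2,v})}{\CL^\RW(\pi_{B_1,v},\mu_{1,v})\,\CL^\RW(\pi_{B_2,v},\mu_{2,v})},\]
where I have used $\CL^\RW(\pi_{B_2}^\vee,\mu_2^{-1})=\CL^\RW(\pi_{B_2},\mu_2)$ (which holds because $\pi_{\mu_2^{-1}}\cong\pi_{\mu_2}^\vee$ and adjoint $L$-factors are invariant under taking contragredient). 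By the admissibility hypothesis, the local numerators in the two products agree at every place. The theorem therefore reduces to the single identity
\[\frac{\CL^\RI(\Pi)}{\CL^\RW(\pi_{B_1},\mu_1)\CL^\RW(\pi_{B_2},\mu_2)} = \prod_v \frac{\CL^\RI(\Pi_v)}{\CL^\RW(\pi_{B_1,v},\mu_{1,v})\CL^\RW(\pi_{B_2,v},\mu_{2,v})}.\]

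To verify this, I substitute the explicit definitions. Using the factorization $L(s,\Pi)=L(s,\pi\times\mu_1)L(s,\pi\times\mu_2)$ from $(*)$, the Jacquet--Langlands equality $L(1,\pi_{B_i},\ad)=L(1,\pi,\ad)$, and the adjoint decomposition $L(s,\pi_{\chi_i},\ad)=L(s,\eta)L(s,\chi_i/\chi_i^\tau)$ (which yields $L(1,\Pi,\ad)=L(1,\pi,\ad)L(1,\eta)^2 L(1,\chi_1/\chi_1^\tau)L(1,\chi_2/\chi_2^\tau)$), both sides of the displayed identity collapse to $L(1,\pi,\ad)/(L(1,\chi_1/\chi_1^\tau)L(1,\chi_2/\chi_2^\tau))$ on the global side and to the corresponding product of local Euler factors on the local side; these agree by the Euler product factorization. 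The factor of $2$ in the statement of the theorem arises because Ichino contributes $\tfrac{1}{8}$ while two applications of Waldspurger together contribute $\tfrac{1}{2}\cdot\tfrac{1}{2}=\tfrac{1}{4}$.

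The main conceptual point, and the only place where genuine work is hidden, is that the admissibility hypothesis is supplied by Theorem \ref{local-see-saw} when $D_v$ is split (via the local see-saw identity between the Ichino and Waldspurger periods, realized through the pair of compatible theta liftings $(\theta_{\pi_v},\theta_{\chi_v})$), while in the compact case ($D_v$ nonsplit) admissibility is imposed by definition on vectors in the relevant one-dimensional Hom-spaces, where it can be achieved since both periods restrict to nonzero functionals there. Once these local inputs and the two standard decomposition formulas are granted, no substantive obstacle remains beyond the $\CL$-factor bookkeeping above; in particular, the analytic difficulty of non-compact torus convergence has already been absorbed into Proposition \ref{local-see-saw abs}.
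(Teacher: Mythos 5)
Your proposal is correct and takes exactly the route the paper intends: the paper gives no written proof of Theorem \ref{main} beyond the phrase ``by comparing the Ichino formula and the Waldspurger formula,'' and your bookkeeping --- $2\cdot\tfrac18 = \tfrac12\cdot\tfrac12$ for the leading constants, the pointwise matching of local periods via admissibility, and the verification that the residual quotient of $\CL$-factors is consistent globally versus place-by-place --- fills in precisely what that phrase elides. Your explicit check that the ratio collapses, via $L(s,\pi_{\chi_i},\ad)=L(s,\eta)L(s,\chi_i/\chi_i^\tau)$, $L(1/2,\Pi)=L(1/2,\pi\times\mu_1)L(1/2,\pi\times\mu_2)$, and the Jacquet--Langlands equalities, to $L(1,\pi,\ad)/(L(1,\chi_1/\chi_1^\tau)L(1,\chi_2/\chi_2^\tau))$, is a correct and useful confirmation that the infinite product over $v$ of the $\CL$-ratios is well-defined and agrees with the global quotient. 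The only minor point worth noting: the paper's own remark attributes the factor of $2$ not to the $\tfrac18$-versus-$\tfrac14$ arithmetic per se but to the discrepancy between the globally normalized theta liftings in Theorem \ref{global-see-saw} and the product of locally normalized theta liftings in Theorem \ref{local-see-saw}; these are of course two descriptions of the same phenomenon, since the Ichino and Waldspurger leading constants encode exactly how the global and local normalizations differ.
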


\begin{remark}
	Theorem \ref{main} seems different from Theorem \ref{global-see-saw} by the scalar $2$.
	This scalar is contributed from the difference between
	the product of explicit local theta liftings used in Theorem \ref{main} and 
	the explicit global theta liftings  used in Theorem \ref{global-see-saw}.
\end{remark}



\end{document}